\definecolor{darkgreen}{rgb}{0.0625,0.64,0.0625}
\def\R{{\mathbb R}}
\theoremstyle{plain}
\newtheorem{thm}{Theorem}[section]
\newtheorem{lem}[thm]{Lemma}
\newtheorem{prop}[thm]{Proposition}
\newtheorem{cor}[thm]{Corollary}
\theoremstyle{definition}
\newtheorem{rem}[thm]{Remark}
\newtheorem{exmp}{Example}[section]
\def\eq#1{{\rm(\ref{#1})}}
\def\ro#1{{\rm #1}}
\def\Bbb#1{{\mathbb#1}}
\def\R{\Bbb R}
\def\Rx{\R\mkern1mu^}
\def\Rn{\Rx n}
\def\AG#1{\ro{A}(#1)}  
\def\AGR#1{\AG{#1,\R}} 
\def\SA#1{\ro{SA}(#1)} 
\def\SAR#1{\SA{#1,\R}} 
\def\GL#1{\ro{GL}(#1)} 
\def\GLR#1{\GL{#1,\R}} 
\def\SL#1{\ro{SL}(#1)} 
\def\SLR#1{\SL{#1,\R}}  
\def\semidirect{\ltimes}
\numberwithin{equation}{section}
\newcommand\blfootnote[1]{%
 \begingroup
 \renewcommand\thefootnote{}\footnote{#1}%
 \addtocounter{footnote}{-1}%
 \endgroup
 }
\begin{document}

\title[A nonlocal curve flow in centro-affine geometry]{A nonlocal curve flow \\ in centro-affine geometry}

    \author[X.J. Jiang]{Xinjie Jiang}
    \address{ Xinjie Jiang\newline\indent
     Department of Mathematics, Northeastern University, Shenyang, 110819, P.R. China}
    \email{serge0912@icloud.com}

    \author[Y. Yang]{Yun Yang}
    \address{ Yun Yang\newline\indent
     Department of Mathematics, Northeastern University, Shenyang, 110819, P.R. China}
    \email{yangyun@mail.neu.edu.cn}

    \author[Y.H. Yu]{Yanhua Yu${}^*$} \blfootnote{${}^*$~Corresponding author: yuyanhua@mail.neu.edu.cn}
    \address{Yanhua Yu\newline\indent
     Department of Mathematics, Northeastern University, Shenyang, 110819, P.R. China}
    \email[Corresponding author]{yuyanhua@mail.neu.edu.cn}

\begin{abstract}
In this paper, the isoperimetric inequality in centro-affine plane geometry is obtained. We also investigate the long-term behaviour for an invariant plane curve flow, whose evolution process can be expressed as a second-order nonlinear parabolic equation  with respect to centro-affine curvature. The forward and backward limits in time are discussed,
which shows that a closed convex embedded curve may converge to an ellipse when evolving according to this flow.
\end{abstract}

\subjclass[2010]{53A15, 53A55, 53E40, 35K52.}

\keywords{centro-affine geometry;\; invariant curve flow;\; energy estimate;\;  long-term behavior}


\maketitle
\section{Introduction}
The study of {\it affine differential geometry} relies on  the Lie group $\AGR n = \GLR n \semidirect \Rn$ consisting of affine transformations $x \longmapsto Ax+b$, $A\in \GLR n$, $b\in \Rn$ acting on $x \in \Rn$
(see Nomizu and Sasaki \cite{ns} and Simon \cite{sim} for details), and correspondingly {\it equi-affine geometry} is restricted to the subgroup $\SAR n = \SLR n \semidirect \Rn$ of volume-preserving affine transformations.
In affine differential geometry, a crucial issue  is to investigate the resulting invariants associated with submanifolds $M \subset \Rn$.
Historically, the classical theory of equi-affine hypersurfaces was developed by Blaschke and his collaborators \cite {Blaschke}.
\emph{Centro-affine differential geometry} refers to the subgroup of the affine transformation group that keeps the origin fixed,  which is closely related to the geometry induced
by the general linear group $x \longmapsto Ax$, $A\in \GLR n$, $x \in \Rn$.
Strictly speaking, \emph{centro-equiaffine differential geometry} arises in connection with the subgroup $\SLR n$ of volume-preserving linear transformations.
These cases are usually mentioned in books devoted to (equi-)affine geometry \cite{ns}.

Let $G$ be a Lie group acting on a smooth manifold $M$. An invariant submanifold flow (cf. \cite{olv-isf}) is a $G$-invariant evolutionary partial differential equation
$\displaystyle\frac{\partial S}{\partial t} = \Phi[S],$
which governs the motion of submanifold $S\subset M$.
For the invariant submanifold flow, a fundamental issue is to determine the induced evolution of the geometric invariants characterizing the submanifold $S$.
In the past several decades, considerable research has been devoted to invariant geometric flows for curves and surfaces in Euclidean and affine geometries. Among them, the simplest and well-studied one is the so-called curve shortening flow (CSF). CSF was introduced firstly by Mullins \cite{mul} as a model for the motion of grain boundaries, and later Gage and Hamilton \cite{gh} proved  that a convex curve embedded in ${\mathbb R}^2$ shrinks to a point  when evolving under CSF.  In the sequel, Grayson \cite{gra-1,gra-2} studied the evolution of non-convex embedded curves and confirmed that if the initial curve is any embedded curve in ${\mathbb R}^2$, then the corresponding curve first becomes convex and finally shrinks to a point in finite time while becoming asymptotically circular, often referred to as a ``circular point''.

The corresponding affine curve shortening flow (ACSF) was firstly studied by  Sapiro and Tannenbaum \cite{st}, and was further investigated by Angenent, Sapiro, and Tannenbaum \cite{ast}. It was shown  that a closed convex curve, when evolved according to ACSF, will shrink to an embedded close curve and finally to a point with absolute curvature converging to $2\pi$ \cite{ast,cz}.
Andrews \cite{and0} extended the results of the affine curve flow to the affine hypersurface flow and then studied an affine-geometric, fourth-order parabolic evolution equation for closed convex curves in the plane \cite{and}, which shows the evolving curve remains strictly convex while expanding to infinite size and approaching a
homothetically expanding ellipse.
Loftin and Tsui discussed the noncompact solutions to the affine normal
flow of hypersurfaces and asserted that all ancient solutions must be either ellipsoids (shrinking solitons) or paraboloids
(translating solitons) \cite{lt}.
Chen classified convex compact ancient solutions to the affine curve shortening flow, namely, any convex compact ancient solution to the affine curve shortening flow must be a shrinking ellipse \cite{chen}.
There are also a series of achievements in centro-equiaffine geometric flow obtained by Ivaki and Stancu. See \cite{is, iva0, iva} and references therein.
In computer vision  \cite{olv-6, ost-1,ost-2}, Euclidean curve shortening flow and its equi-affine counterpart have been successfully applied to image denoising and segmentation, they were also actively used in practical computer implementations. More recently, similar results were obtained for the heat flow in  centro-equiaffine geometry \cite{wwq} and  centro-affine geometry \cite{oqy}.  Heat flows in more general Klein geometries were investigated \cite{olv-isf, ost-1}.

For a given Lie group $G$ or a Lie algebra $\mathfrak{g}$, one can define its group curvature as the differential invariant of the
lowest order and group arc-length element $ds$ as the invariant one-form. The group invariant geometric heat flow is simply given by
\begin{equation}\label{ighf}
  \frac{\partial C}{\partial t} = C_{ss}.
\end{equation}

When the group $G$ is the Euclidean group,  the flow \eqref{ighf} is CSF (cf. \cite{gh}) and yields a second-order nonlinear  parabolic equation for the Euclidean curvature $\kappa$ and Euclidean arc-length $s$
\begin{equation*}
\frac{\partial\kappa}{\partial t}=\frac{\partial^2\kappa}{\partial s^2}+\kappa^3.
\end{equation*}

When the group $G$ is the special affine group,  the flow \eqref{ighf} is called ACSF. By \cite{ast,st}, this flow generates a second-order nonlinear  parabolic equation for the equi-affine curvature $\kappa$ and equi-affine arc-length $s$
\begin{equation*}
\frac{\partial\kappa}{\partial t}=\frac{4}{3}\frac{\partial^2\kappa}{\partial s^2}+\frac{1}{3}\kappa^2.
\end{equation*}

When the group $G$ is the centro-equiaffine group, by \cite{wwq}, the flow \eqref{ighf} induces a second-order nonlinear  parabolic equation for the centro-equiaffine curvature $\kappa$ and centro-equiaffine arc-length $s$
\begin{equation*}
\frac{\partial\kappa}{\partial t}=\frac{\partial^2\kappa}{\partial s^2}+4\kappa^2.
\end{equation*}

In \cite{oqy}, the centro-affine invariant geometric heat flow was investigated, that is, $G$ is the general linear group. In this case, the flow \eqref{ighf} leads to the well-known inviscid Burgers' equation with respect to the centro-affine curvature $\kappa$ and centro-affine arc-length $s$
\begin{equation*}
\frac{\partial\kappa}{\partial t}=\kappa\kappa_s.
\end{equation*}

Therefore, an interesting problem arises: Can we find a flow that yields a second-order nonlinear  parabolic equation for the centro-affine curvature and has some  analogous properties to the invariant heat flows in Euclidean, equi-affine and centro-equiaffine geometry.

Note that, from now on, we use $\xi$  and $\varphi$ to  denote the centro-affine arc-length parameter and centro-affine curvature, respectively. In \cite{qy},
the curve flow in the centro-affine setting
\begin{equation}\label{pC_t}
	\frac{\partial C(p,t)}{\partial t}=\left(\lambda+\int_{0}^{\xi(p)} \varphi d\xi\right)C+\frac{\varphi}{2}C_\xi, \quad C(\cdot,0)=C_0(\cdot),
\end{equation}
was studied, where $ \lambda $ is a constant. This flow generates a second-order nonlinear  parabolic equation for the centro-affine curvature $\varphi$
\begin{equation*}
	\frac{\partial \varphi}{\partial t}=\frac{1}{2}\varphi_{\xi\xi}-\frac{1}{2}\varphi^3+2\varphi.
\end{equation*}
The choice of constant $ \lambda $ does not influence the whole flow. In fact, by a reparametrization $\displaystyle \tilde{C}=\exp(-\lambda t)C$, the flow is centro-affine equivalent to
	\begin{align*}
		\frac{\partial \tilde{C}(p,~t)}{\partial t}=\Big(\int^{\xi(p)}_0\varphi d\xi\Big) \tilde{C}+\frac{\varphi}{2}\tilde{C}_{\xi},\quad
		\tilde{C}(\cdot,0)=C_0(\cdot).
	\end{align*}

In the current paper, we obtain a centro-affine isoperimetric inequality
(see Proposition \ref{dz} for details)
\begin{equation*}
L=\oint d\xi\leq2\pi
\end{equation*}
for any convex smooth embedded closed curve in $\R^2$, and equality holds only for the ellipse centered at the origin.
 In particular, the centro-affine isoperimetric inequality is essential  in the
proof of Theorem \ref{thm}.

The local existence and uniqueness of this flow has been discussed in \cite{qy},
and in this paper we will prove that the solution to \eqref{pC_t} continues as $t\to+\infty$ (see Theorem \ref{thm-globel} for details).
Nonlocal flows have been widely used to prove geometric inequalities in the Euclidean and hyperbolic space \cite{and1,ath,cm,is,mc1,mc2,mc3,sin},
and this method has become very popular during past the decades \cite{gpt,gps,sch},
in which the long-time existence of a suitably defined geometric flow, convergence to a steady state and monotonicity of geometric quantity play a pivotal role.
In this paper, we mainly focus on the following two theorems.
\begin{thm}\label{thm}
Assume $ C(\cdot,t) $ is a solution of \eqref{pC_t} in a maximal interval $ [0,+\infty)  $, where $ C_0 $ is a closed convex smooth embedded curve. Then the centro-affine curvature $ \varphi $ of $ C(\cdot,t) $ converges smoothly to zero, that is, $ C(\cdot,t) $ converges to an ellipse as $t\to+\infty$.
\end{thm}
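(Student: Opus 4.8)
The plan is to leverage the two results already in place, namely the global existence of Theorem~\ref{thm-globel} and the centro-affine isoperimetric inequality of Proposition~\ref{dz}, to manufacture a monotone, bounded geometric quantity whose decay forces $\varphi\to0$. Since $C_0$ is closed, convex and embedded and the solution of \eqref{pC_t} persists on $[0,+\infty)$ while remaining a closed convex embedded curve, the centro-affine curvature $\varphi$, the induced measure $d\xi$, and the total length $L(t)=\oint d\xi$ are well defined for all $t$, and the parabolic equation $\varphi_t=\tfrac12\varphi_{\xi\xi}-\tfrac12\varphi^3+2\varphi$ holds throughout.

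First I would compute the evolution of the length. Writing $\partial_t(d\xi)=\psi\,d\xi$ for the flow \eqref{pC_t} and integrating over the closed curve, I expect the tangential and nonlocal contributions to integrate to zero, leaving a clean identity of the form
\[
\frac{dL}{dt}=\frac12\oint\varphi^2\,d\xi\ \ge\ 0 .
\]
Combined with Proposition~\ref{dz}, which gives $L(t)\le2\pi$, this shows $L$ is nondecreasing and bounded, so $L(t)\to L_\infty\le2\pi$ and
\[
\int_0^{+\infty}\!\!\oint\varphi^2\,d\xi\,dt=2\bigl(L_\infty-L(0)\bigr)<+\infty .
\]
This is exactly where the isoperimetric inequality is indispensable: it supplies the upper barrier that converts monotonicity of $L$ into an integrable-in-time decay statement for $\oint\varphi^2\,d\xi$.

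Next I would establish uniform-in-time a priori bounds. Applying the maximum principle to $\varphi_t=\tfrac12\varphi_{\xi\xi}-\tfrac12\varphi^3+2\varphi$, at a spatial maximum one has $\varphi_{\xi\xi}\le0$, hence $\tfrac{d}{dt}\varphi_{\max}\le\varphi_{\max}\bigl(2-\tfrac12\varphi_{\max}^2\bigr)$, which yields $\sup|\varphi|\le\max\{\sup|\varphi(\cdot,0)|,\,2\}$ uniformly in $t$; the nonlocal coefficient is controlled through $\bigl|\int_0^\xi\varphi\,d\xi\bigr|\le L^{1/2}\bigl(\oint\varphi^2\,d\xi\bigr)^{1/2}\le(2\pi)^{1/2}\bigl(\oint\varphi^2\,d\xi\bigr)^{1/2}$. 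Differentiating the equation and running energy (Bernstein-type) estimates on $\oint(\partial_\xi^m\varphi)^2\,d\xi$, while using $L(t)\ge L(0)>0$ to keep the moving parametrization nondegenerate, I would bootstrap to uniform bounds on every spatial derivative $\sup|\partial_\xi^m\varphi|$.

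Finally I would upgrade integrated decay to smooth convergence. The uniform bounds make $t\mapsto\oint\varphi^2\,d\xi$ Lipschitz, so the finiteness of its time integral forces $\oint\varphi^2\,d\xi\to0$ as $t\to+\infty$ (not merely along a subsequence). Interpolating by Gagliardo--Nirenberg between the decaying $L^2$ norm and the uniformly bounded higher Sobolev norms then gives $\|\varphi\|_{C^k}\to0$ for every $k$. Since $\varphi\equiv0$ characterizes the ellipses centered at the origin, smooth decay of $\varphi$ is precisely smooth convergence of $C(\cdot,t)$ to an ellipse, and $L_\infty=2\pi$ follows consistently from the equality case of Proposition~\ref{dz}. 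The step I anticipate to be the main obstacle is the third one: obtaining the uniform higher-order derivative estimates for a nonlinear and nonlocal flow, because the $+2\varphi$ term is destabilizing pointwise and is tamed only globally, through the length monotonicity together with the mean-type constraint on $\varphi$ inherited from the closedness of the curve.
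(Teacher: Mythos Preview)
Your proposal is correct and follows essentially the same route as the paper: the identity $\tfrac{dL}{dt}=\tfrac12\oint\varphi^2\,d\xi$ together with Proposition~\ref{dz} gives $\int_0^\infty E(t)\,dt<\infty$ (this is the paper's Lemma~5.1), the maximum principle bound on $\varphi$ is Lemma~\ref{lem-bd1}, the higher-order energy bounds are exactly the content of Section~\ref{EE} and Corollary~5.3, and the Lipschitz-plus-integrable argument forcing $E(t)\to0$ is Proposition~5.5. The only cosmetic difference is in the last step, where the paper passes through Sobolev bounds and Arzel\`a--Ascoli rather than explicit Gagliardo--Nirenberg interpolation; both accomplish the same upgrade from $L^2$ decay to $C^k$ decay.
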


\begin{thm}\label{thm2}
Assume the closed convex smooth embedded curve family $C(\cdot,t) $ is a solution of \eqref{pC_t} in the interval $ (-\infty,+\infty)$ with  uniformly bounded centro-affine curvature. Then the centro-affine curvature $ \varphi $ of $ C(\cdot,t) $ converges smoothly to zero as $t\to-\infty$, that is, $ C(\cdot,t) $ converges to an ellipse
as $t\to-\infty$.
\end{thm}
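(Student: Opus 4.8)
The plan is to read Theorem \ref{thm2} as a statement about eternal solutions and to run, backward in time, the monotonicity machinery developed for Theorem \ref{thm}, the genuinely new input being the global bound $|\varphi|\le M$. The first point is that for an eternal solution this $C^0$ bound bootstraps to bounds on all derivatives that are \emph{uniform in $t$}: applying interior parabolic estimates on each time slab $[t-1,t]$ (available precisely because the solution exists for all earlier times) yields $\|\varphi(\cdot,t)\|_{C^k}\le C_k$ with $C_k$ independent of $t$. Consequently the family of curvature profiles $\{\varphi(\cdot,t)\}_{t\le 0}$ is precompact in $C^\infty$, so every sequence $t_j\to-\infty$ has a subsequence along which $\varphi(\cdot,t_j)$ converges smoothly, with the corresponding curves converging to a closed convex embedded limit.

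Next I would record the monotone energy $E(t)$ from the proof of Theorem \ref{thm}, of the schematic form $E'(t)=-\oint(\partial_t\varphi)^2\,d\xi+(\text{lower order})$. Using $|\varphi|\le M$ together with the centro-affine isoperimetric inequality $L=\oint d\xi\le 2\pi$ (Proposition \ref{dz}), $E(t)$ is bounded above and below, so the monotone limit $E_{-\infty}=\lim_{t\to-\infty}E(t)$ exists and $\int_{-\infty}^{0}|E'(t)|\,dt<\infty$. Since the uniform bounds make $E'$ Lipschitz in $t$, an integrable Lipschitz function must tend to zero (Barbalat's lemma), whence $E'(t)\to0$ and therefore $\partial_t\varphi\to0$ as $t\to-\infty$. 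Combined with precompactness, this forces \emph{every} subsequential limit to be a steady state, i.e. its curvature solves
\[
\tfrac12\varphi_{\xi\xi}-\tfrac12\varphi^3+2\varphi=0 .
\]

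The decisive step is then rigidity: among closed convex embedded curves the only steady state is the origin-centered ellipse $\varphi\equiv0$. The spatially constant solutions of the displayed equation are $\varphi\in\{0,\pm2\}$, and I would discard $\varphi\equiv\pm2$ and every nonconstant periodic profile either by showing they are incompatible with closure, convexity and embeddedness, or by invoking the equality case of Proposition \ref{dz}. Concretely, a steady curvature profile means the limit curve evolves only within its $\GLR2$-orbit, i.e. $C_t=BC$ for a constant $B$, so the limit is self-similar; the sharp isoperimetric inequality then forces such a curve to saturate $L=2\pi$, whose equality case is exactly the ellipse. Hence each subsequential limit satisfies $\varphi\equiv0$, and precompactness in $C^\infty$ upgrades this to full smooth convergence $\varphi(\cdot,t)\to0$ as $t\to-\infty$, which is the assertion.

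The main obstacle is twofold. First, establishing the uniform-in-$t$ higher-order estimates backward in time from the $C^0$ bound alone: the nonlocal factor $\lambda+\int_0^{\xi}\varphi\,d\xi$ and the time dependence of the arc length $\xi$ make the commutator between $\partial_t$ and $\partial_\xi$ nontrivial, so the Schauder bootstrap must be carried out carefully to keep every constant independent of $t$. Second, and more essentially, is the rigidity classification, namely converting the analytic conclusion $\partial_t\varphi\to0$ into the geometric statement that the limiting level $E_{-\infty}$ can only be that of the ellipse; this is exactly where the sharp centro-affine isoperimetric inequality and its equality characterization from Proposition \ref{dz} have to be used, and it is the step I expect to require the most work.
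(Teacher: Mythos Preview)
Your outline follows a ``compactness plus rigidity'' template, but the paper takes a shorter route that bypasses rigidity entirely, and your implementation has a genuine gap at precisely the step you flag as hardest. The key identity you are missing is that the monotone quantity here is not $E(t)=\oint_C\varphi^2\,d\xi$ but the centro-affine perimeter $L(t)$, via $L'(t)=\tfrac12 E(t)\ge0$. Since $L$ is nondecreasing and bounded above by $2\pi$ (Proposition~\ref{dz}), one obtains directly
\[
\int_{-\infty}^{0}E(t)\,dt = 2\Bigl(L(0)-\lim_{t\to-\infty}L(t)\Bigr)\le 2L(0)<\infty,
\]
so $E\in L^1(-\infty,0]$. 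After proving uniform bounds on all $\varphi_{\xi^n}$ (the paper does this by a maximum-principle argument on the auxiliary quantity $(\tau-\tau_0)\varphi_{\xi^n}^2+M\varphi_{\xi^{n-1}}^2$ rather than Schauder, which handles the moving arc-length cleanly), $E'$ is bounded and hence $E(t)\to0$ as $t\to-\infty$. This already gives $\varphi\to0$ in $L^2$; the uniform $C^1$ bound upgrades it to $\max|\varphi|\to0$, and an induction then propagates the decay to every derivative. The isoperimetric inequality enters only as the upper bound $L\le2\pi$, never through its equality case, and no steady-state classification is needed.

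Your version, by contrast, asserts that $E$ is monotone with $E'=-\oint(\partial_t\varphi)^2\,d\xi+\text{lower order}$; the actual identity is $E'=-\oint\varphi_\xi^2-\tfrac12\oint\varphi^4+4E$, which is not sign-definite, so neither ``$E_{-\infty}$ exists'' nor ``$\partial_t\varphi\to0$'' follows as written. More seriously, even granting a steady-state limit, your rigidity step is incomplete: you claim that a self-similar limit under $\GLR2$ must saturate $L=2\pi$, but no mechanism for this is given, and you do not exclude the nonconstant periodic profiles of $\tfrac12\varphi_{\xi\xi}-\tfrac12\varphi^3+2\varphi=0$. The relation $E=2L'$ removes the need for any of this, since it forces the backward limit of $E$ to be zero rather than merely to exist.
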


This paper is organized as follows. In Section 2, we recall the relevant definitions, notions,  and basic facts for curves in centro-equiaffine and centro-affine geometries. In particular, the centro-affine isoperimetric inequality is obtained. Section 3 introduces the centro-aﬀine invariant curve flow \eqref{pC_t} and shows that it is nonlocal. Some energy inequalities are derived in Section 4. Section 5 combines the results of energy estimates to verify that a closed embedded curve may converge to an ellipse when evolving according to this flow. In Sections 6 and 7, we discuss the backward limit of the  centro-affine curvature of this flow and complete the proof of Theorem \ref{thm2}.

\section{Preliminaries }\label{sec-back}
 Let $ C(p): I\rightarrow\mathbb{R}^2 $ be an embedded plane curve with parameter $ p $, where $ I\subset \R$ denotes a certain interval. In this paper, we will always assume the mappings are  sufficiently smooth so that all the relevant derivatives are well-defined and focus our attention on {\it regular star-shaped} centro-affine curves that satisfy $ \displaystyle[C_i,C_{i+1}]\neq0 $, where the bracket notation stands for the determinant,  $ C_0=C $ and  $\displaystyle C_i=\frac{d^iC}{dp^i}$  for $ i>1. $
\subsection{Centro-affine differential geometry}
We start by reviewing  some  of the relevant basic  concepts   about centro-affine geometry.
The \textit{centro-affine transformation}  in the plane is defined as
\begin{equation*}
 \mathbf{x} \longmapsto \mathcal{A}\mathbf{x},
\end{equation*}
where $\mathcal{A}\in\GL{2,\mathbb{R}}$ and $ \textbf{x}\in\mathbb{R}^2 $. Although our interest lies  in centro-affine geometry, let us talk briefly about centro-equiaffine geometry, i.e., restrict $ \mathcal{A} $ in the  subgroup $ \SLR{2} $.
 The \textit{centro-equiaffine arc-length}, invariant under the centro-equiaffine transformation, is defined as  a parameter $ \sigma $ such that
\begin{equation}\label{arc}
	[C,C_\sigma]=1.
\end{equation}
It naturally follows that
\begin{equation*}
	\frac{d\sigma}{dp}=[C,C_p],
\end{equation*}
where $ p $ is the free parameter.
Differentiating both sides of the equation \eqref{arc} with respect to the centro-equiaffine arc-length parameter $\sigma$ yields
\begin{equation*}
	[C,C_{\sigma\sigma}]=0,
\end{equation*}
which implies that $ C $ and $ C_{\sigma\sigma} $ are not independent.
The \textit{centro-equiaffine curvature} $ \mu  $, the only generating differential invariant of plane curves, is defined as a function such that
\begin{equation*}
	\mu C+C_{\sigma\sigma}=0,
\end{equation*}
or, equivalently,
\begin{equation*}
	\mu=[C_\sigma,C_{\sigma\sigma}].
\end{equation*}
Now let us discuss the centro-affine geometry. In \cite{oqy}, the \textit{centro-affine arc-length} $ \xi $ is a parameter determined from the relation
\begin{equation}\label{arc-c}
	\left\vert \frac{[C_\xi,C_{\xi\xi}]}{[C,C_\xi]} \right\vert=1.
\end{equation}
If we introduce the notation
\begin{equation*}\label{sgn}
	\epsilon:={\rm sgn}\left(\frac{[C_p,C_{pp}]}{[C,C_p]}\right),
\end{equation*}
where $ {\rm sgn} (\cdot)$ is the signum function, then the equation \eqref{arc-c} immediately leads to
\begin{equation}\label{ca-arc}
	\frac{d\xi}{dp}=\sqrt{\epsilon\frac{[C_p,C_{pp}]}{[C,C_p]}}.
\end{equation}
Moreover, the {\it centro-affine curvature} $\varphi$, the lowest order centro-affine differential invariant, is also  defined in \cite{oqy}
\begin{equation*}\label{cur}
	\varphi=\frac{[C_{\xi\xi},C]}{[C_\xi,C]}=\sqrt{\epsilon\frac{[C,C_p]}{[C_p,C_{pp}]}}\left(\frac{3}{2}\frac{[C,C_{pp}]}{[C,C_p]}-\frac{1}{2}\frac{[C_p,C_{ppp}]}{[C_p,C_{pp}]}\right).
\end{equation*}
The following equations
\begin{equation}\label{rel-eca-ca}
	\epsilon={\rm sgn}(\mu),\qquad \frac{d\xi}{d\sigma}=\sqrt{\epsilon\mu},\qquad \varphi=-\frac{\epsilon}{2}(\epsilon\mu)^{-3/2}\mu_\sigma
\end{equation}
reveal the relation between the invariants of centro-equiaffine geometry and centro-affine geometry, which can be obtained by considering parameter $ p $ in the corresponding equation as centro-equiaffine arc-length parameter $ \sigma $.

\subsection{Centro-affine isoperimetric inequality}
For the closed curve $ C $ in the centro-affine setting,  we have the following two results (see \cite{qy} for details).
\begin{lem}\label{meanzero}
	If the curve C is closed, then $ \displaystyle\oint_{C} \varphi d\xi =0$.
\end{lem}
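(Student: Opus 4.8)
The plan is to show that the one-form $\varphi\, d\xi$ is \emph{exact}, i.e. that it is the total $\xi$-derivative of a single-valued function along the curve; then its integral around the closed curve vanishes automatically. The key observation is that the denominator $[C_\xi,C]$ appearing in the definition $\varphi=[C_{\xi\xi},C]/[C_\xi,C]$ is exactly the quantity whose logarithmic derivative reproduces $\varphi$.

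First I would differentiate $[C_\xi,C]$ with respect to the centro-affine arc-length $\xi$. Since the bracket is bilinear and antisymmetric, the product rule gives
\begin{equation*}
\frac{d}{d\xi}[C_\xi,C]=[C_{\xi\xi},C]+[C_\xi,C_\xi]=[C_{\xi\xi},C],
\end{equation*}
because $[C_\xi,C_\xi]=0$. Comparing with the definition of the centro-affine curvature then yields
\begin{equation*}
\varphi=\frac{[C_{\xi\xi},C]}{[C_\xi,C]}=\frac{1}{[C_\xi,C]}\,\frac{d}{d\xi}[C_\xi,C]=\frac{d}{d\xi}\ln\bigl|[C_\xi,C]\bigr|.
\end{equation*}

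To make the logarithm legitimate I would invoke the standing hypothesis that $C$ is a regular star-shaped centro-affine curve, so that $[C,C_\xi]\neq 0$ everywhere (equivalently $[C_\xi,C]$ never vanishes); hence $\ln|[C_\xi,C]|$ is a well-defined smooth function along the curve. Because $C$ is closed, $C$ and all its $\xi$-derivatives are periodic, so $[C_\xi,C]$, and therefore $\ln|[C_\xi,C]|$, returns to its initial value after one full circuit. Integrating the exact form over the closed curve then gives
\begin{equation*}
\oint_{C}\varphi\, d\xi=\oint_{C}\frac{d}{d\xi}\ln\bigl|[C_\xi,C]\bigr|\, d\xi=\Bigl[\ln\bigl|[C_\xi,C]\bigr|\Bigr]_{\text{one period}}=0,
\end{equation*}
which is the claim.

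The computation itself is routine; the only point requiring care is the justification that the integrand is a genuine total derivative of a \emph{single-valued} function (periodic, with nowhere-vanishing argument), which is precisely where the star-shapedness assumption enters. As a consistency check one could instead pass to centro-equiaffine quantities via \eqref{rel-eca-ca}: a short substitution shows $\varphi\, d\xi=-\tfrac{1}{2}\,d\bigl(\ln|\mu|\bigr)$, and since $\mu$ is periodic and nonvanishing on a closed regular curve the integral again collapses to zero.
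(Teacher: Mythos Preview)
Your argument is correct: writing $\varphi=\dfrac{d}{d\xi}\ln\bigl|[C_\xi,C]\bigr|$ and using periodicity together with the star-shapedness hypothesis $[C,C_\xi]\neq 0$ is exactly the right idea, and the centro-equiaffine consistency check via \eqref{rel-eca-ca} is a nice independent confirmation. The paper itself does not supply a proof of this lemma---it simply cites \cite{qy}---so there is no in-paper argument to compare against; your self-contained derivation is the natural one and would be what a reader reconstructs from the definitions given here.
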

\begin{lem}\label{lem-ep-1}
	If the curve C is closed, then $\epsilon$=1 and the origin lies inside every simple closed loop of the curve C.
\end{lem}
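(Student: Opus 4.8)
The plan is to push both assertions through the centro-equiaffine framework, where the structure equation $C_{\sigma\sigma}=-\mu C$ and the normalization \eqref{arc} make the geometry transparent, and to exploit the bridge $\epsilon=\mathrm{sgn}(\mu)$ recorded in \eqref{rel-eca-ca}. As a preliminary observation, note that under the standing regularity hypotheses $[C,C_p]\neq0$ and $[C_p,C_{pp}]\neq0$ both brackets are continuous and nowhere zero on the connected parameter circle of a closed curve, hence each has constant sign; in particular $\epsilon$ is a genuine constant, $C\neq0$ everywhere (so the origin is off the curve, since $C=0$ would force $[C,C_p]=0$), and by \eqref{rel-eca-ca} the centro-equiaffine curvature $\mu$ is nonvanishing with one sign throughout. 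After orienting the curve so that $[C,C_p]>0$, I pass to the centro-equiaffine arc length $\sigma$ of \eqref{arc}, for which $[C,C_\sigma]=1$ and $\epsilon=\mathrm{sgn}(\mu)$.

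To prove $\epsilon=1$ it then suffices to rule out $\mu<0$, and here the key step is to test the structure equation against the position vector itself. Differentiating $|C|^2$ twice in $\sigma$ and using $C_{\sigma\sigma}=-\mu C$ gives
\[
(|C|^2)_{\sigma\sigma}=2\bigl(|C_\sigma|^2-\mu|C|^2\bigr).
\]
If $\mu<0$ the right-hand side is strictly positive (recall $|C|>0$), so $|C|^2$ would be a strictly convex, yet $\sigma$-periodic, function of $\sigma$; this is impossible, since integrating $(|C|^2)_{\sigma\sigma}$ over one period yields $0$ by periodicity while the integrand is everywhere positive. Hence $\mu>0$ and $\epsilon=1$.

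For the location of the origin I would argue by monotonicity of the polar angle. Writing $C=r(\cos\theta,\sin\theta)$ with $r=|C|>0$, a direct computation gives $[C,C_\sigma]=r^2\,\theta_\sigma$, so the normalization $[C,C_\sigma]=1$ forces $\theta_\sigma>0$: the polar angle increases strictly along $C$. Consequently, along any simple closed loop of $C$ the total increment of $\theta$ equals $2\pi$ times the winding number of that loop about the origin, which is therefore a strictly positive integer. Since a simple closed curve has winding number $0$ or $\pm1$ about any point not on it, this number must equal exactly $1$, i.e. the origin lies in the interior of the loop.

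The main obstacle I expect lies in the bookkeeping of this last step for curves that are merely closed rather than embedded: one must justify that $\theta$ is globally well defined and monotone, decompose a self-intersecting curve into its simple closed loops at the self-intersection points, and apply the winding-number dichotomy to each loop separately with its induced orientation. The analytic heart of the lemma, by contrast, is the short convexity argument for $|C|^2$, which converts the a priori sign ambiguity of the centro-equiaffine curvature into an elementary statement about periodic functions.
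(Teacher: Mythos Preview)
The paper does not supply its own proof of this lemma; it is quoted from \cite{qy} with the remark ``see \cite{qy} for details'' and left unproved in the text. Your argument therefore stands on its own rather than competing against an in-paper proof, and it does so successfully. The convexity computation for $|C|^2$ is a clean way to force $\mu>0$: once $C_{\sigma\sigma}=-\mu C$ and $|C|>0$ are in hand, the identity $(|C|^2)_{\sigma\sigma}=2\bigl(|C_\sigma|^2-\mu|C|^2\bigr)$ together with periodicity rules out $\mu<0$ immediately. The polar-angle identity $[C,C_\sigma]=r^2\theta_\sigma=1$ then gives strict monotonicity of $\theta$ and hence a positive winding number about the origin for every closed subarc, which forces the origin inside each simple loop by the Jordan-curve dichotomy. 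Your final paragraph correctly flags the only bookkeeping issue (decomposing a non-embedded closed curve into simple loops), and since $\theta_\sigma>0$ persists on every subarc that step is routine. The proof is complete as written.
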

Now let us  derive the centro-affine isoperimetric inequality.
\begin{prop}\label{dz}
	In centro-affine geometry, for any smooth convex  embedded closed curve $C$, the centro-affine perimeter of the curve
\begin{equation*}\label{iso-ca}
	L=\oint_C d\xi\le2\pi,
\end{equation*}
and equality holds if and only if $ C $ is an ellipse centered at the origin.
\end{prop}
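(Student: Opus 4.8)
The plan is to use the full $\GLR{2}$-invariance of the centro-affine arc-length to reduce the inequality to a one-dimensional statement about the support function. Since $C$ is a smooth convex curve with the origin in its interior (Lemma \ref{lem-ep-1}), I would parametrize it by the outer normal angle $\theta\in[0,2\pi]$ via its support function $h(\theta)>0$, so that $C(\theta)=h(\cos\theta,\sin\theta)+h'(-\sin\theta,\cos\theta)$, with radius of curvature $\rho:=h+h''>0$. A direct bracket computation then gives $[C,C_\theta]=h\rho$ and $[C_\theta,C_{\theta\theta}]=\rho^{2}$, whence the centro-equiaffine curvature is $\mu=[C_\theta,C_{\theta\theta}]/[C,C_\theta]^{3}=1/(h^{3}\rho)$. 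Using the relations \eqref{rel-eca-ca} with $\epsilon=1$ (Lemma \ref{lem-ep-1}) and $d\sigma=[C,C_\theta]\,d\theta$, the centro-affine line element collapses to
\begin{equation*}
 d\xi=\sqrt{\mu}\,d\sigma=\sqrt{\frac{h+h''}{h}}\,d\theta=\sqrt{1+\frac{h''}{h}}\,d\theta .
\end{equation*}
Thus the whole assertion becomes the sharp estimate $\int_{0}^{2\pi}\sqrt{1+h''/h}\,d\theta\le 2\pi$ over all positive $2\pi$-periodic $h$ with $h+h''>0$.

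A first consistency check supports this reduction: for $h\equiv\mathrm{const}$ (a centered circle) the integrand is identically $1$ and $L=2\pi$; moreover, since $d\xi$ is $\GLR{2}$-invariant and every origin-centered ellipse is a linear image of a circle, each such ellipse also yields $L=2\pi$, which is exactly the conjectured equality case. To prove the bound I would attempt to dominate the integrand by a constant plus an exact derivative: writing $\sqrt{t}=\sqrt{(t/s)\,s}\le\tfrac12\bigl(t/s+s\bigr)$ for a comparison weight $s(\theta)>0$ to be chosen (naturally built from $h$ and $\rho$), the goal is to make the $h''$-contribution integrate to zero — it is the derivative of a periodic quantity, the same mechanism underlying Lemma \ref{meanzero} — while the remaining terms integrate to precisely $2\pi$. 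The expected equality mechanism is the characterization of origin-centered ellipses as the curves of \emph{constant} centro-equiaffine curvature $\mu$ (equivalently $\varphi\equiv0$), since $C''+\mu C=0$ with $\mu$ constant integrates to a sinusoid tracing such an ellipse.

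The main obstacle I anticipate is the combination of getting the inequality in the correct direction with establishing rigidity. The delicate point is that the equality configuration does \emph{not} satisfy $1+h''/h\equiv 1$ pointwise — for an ellipse the value $2\pi$ is produced only after integration and cancellation — so no pointwise estimate can be sharp, and the argument must genuinely use the global fact that $h$ carries a closed convex body about the origin. In a Fourier expansion of $h$ the first harmonic, which encodes translating the body away from the origin, enters the second variation of $L$ about the circle with the opposite sign to the higher harmonics; controlling this translational mode under the convexity constraint $h+h''>0$ is the crux. My plan to close the gap is therefore a Wirtinger-type analysis: expand $L$ to second order around the extremal, isolate the dangerous low-frequency contribution, and then upgrade the resulting integral inequality to the rigidity statement ``$L=2\pi\ \Leftrightarrow\ \mu\ \text{constant}\ \Leftrightarrow\ C$ an origin-centered ellipse,'' which is where I expect the real work to concentrate.
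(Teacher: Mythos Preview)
Your reduction to the support-function integral
\[
L=\int_0^{2\pi}\sqrt{(h+h'')/h}\,d\theta
\]
is correct, and your identification of the equality configuration is right. But the proof itself has a genuine gap: you never establish the inequality. A second-variation/Wirtinger expansion about the circle would at best show that centered ellipses are \emph{local} maxima of $L$; you offer no mechanism to promote this to the global bound $L\le 2\pi$, and you yourself flag the translational (first-harmonic) mode as uncontrolled. As stated, the ``plan'' is an outline of difficulties rather than an argument.

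The paper's proof avoids all of this by not working from scratch. It writes $d\xi=\sqrt{\mu}\,d\sigma$ (your relation \eqref{rel-eca-ca} with $\epsilon=1$), applies Cauchy--Schwarz,
\[
L^2=\Bigl(\oint_C\sqrt{\mu}\,d\sigma\Bigr)^2\le\oint_C d\sigma\cdot\oint_C\mu\,d\sigma
=2A\cdot\oint_C\mu\,d\sigma,
\]
and then invokes the \emph{centro-equiaffine} isoperimetric inequality $A\oint_C\mu\,d\sigma\le 2\pi^2$ of Wo--Wang--Qu, whose equality case is precisely ``$C$ is an ellipse''. In your support-function variables this is the split $\sqrt{\rho/h}=\sqrt{h\rho}\cdot h^{-1}$, giving $L^2\le 2A\int_0^{2\pi}h^{-2}d\theta\le 4\pi^2$. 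The rigidity then falls out: equality in Cauchy--Schwarz forces $\mu=1/(h^3\rho)$ constant (so $\varphi\equiv 0$), and among ellipses a direct check of \eqref{ca-arc} shows $L=2\pi$ only when the center is the origin. So the missing idea you need is not a delicate Fourier argument but a single Cauchy--Schwarz estimate that reduces the problem to an already-known inequality.
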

\begin{proof}
	Let us first express the centro-affine perimeter of the curve in centro-equiaffine setting. By \eqref{rel-eca-ca} and Lemma \ref{lem-ep-1},
	\begin{equation*}
		L=\oint_Cd\xi=\oint_C\sqrt{\mu}d\sigma.
	\end{equation*}
Then we will use the isoperimetric inequality in centro-equiaffine plane geometry \cite{wwq}, which states that
	\begin{equation}\label{4}
		A\oint_C\mu d\sigma\le2\pi^2,
	\end{equation}
	where $ A $ is the area enclosed by the curve, and equality holds if and only if $C$ is an ellipse. It is worth emphasizing that in the cetro-equiaffine setting,
	\begin{equation*}
		A=\frac{1}{2}\oint_C[C,C_\sigma]d\sigma=\frac{1}{2}\oint_Cd\sigma.
	\end{equation*}
Substituting back into \eqref{4} yields
\begin{equation*}
	\oint_Cd\sigma\oint_C\mu d\sigma\le4\pi^2.
\end{equation*}
Hence, employing the Cauchy-Schwarz inequality, we deduce that
	\begin{equation*}
		\left(\oint_{C}d\xi\right)^2=\left(\oint_C \sqrt{\mu} d \sigma\right)^2\le\oint_Cd\sigma\oint_C\mu d\sigma\le 4\pi^2,
	\end{equation*}
	which implies
	\begin{equation*}
		\oint_Cd\xi\le2\pi.
	\end{equation*}
	To show the equality holds if and only if $ C $ is an ellipse, we need only to notice the following fact.
    If $C$ is an ellipse expressed by $(x_0+a\cos p, y_0+b\sin p)^{\mathbf{T}},\; p\in[0, 2\pi)$. Here the superscript ``${}^\mathbf{T}$" represents the transpose of a
    vector. By \eqref{ca-arc}, it is easy to check
    \begin{equation*}
    \oint_C d\xi =2\pi
    \end{equation*}
    only holds for $x_0=0$ and $y_0=0$.
\end{proof}

\section{A second-order curve flow in centro-affine geometry}
Let us now investigate the centro-affine invariant curve flow, described by a family of time-dependent embedded smooth closed curves  $ C(p,t): S^1\times I\rightarrow\mathbb{R}^2 $ satisfying the following evolution equation
\begin{equation*}
	\frac{\partial C(p,t)}{\partial t}=\left(\lambda+\int_{0}^{\xi(p)} \varphi d\xi\right)C+\frac{\varphi}{2}C_\xi
\end{equation*}
with the initial condition
\begin{equation*}
	C(\cdot,0)=C_0(\cdot),
\end{equation*}
where  $ \lambda $ is a constant.
In view of \eqref{ca-arc} and Lemma \ref{lem-ep-1}, the invariant centro-affine metric for the closed curve $ C(p,t) $ can be represented as
\begin{equation*}
	g(p):=\sqrt{\frac{[C_p,C_{pp}]}{[C,C_p]}}
\end{equation*}
so that the arc-length parameter $ \xi $ is defined by
\begin{equation*}
	\xi(p)=\int_{p_0}^p g(p)dp.
\end{equation*}
According to the relation $\displaystyle \frac{\partial^2}{\partial t\partial p}=\frac{\partial^2}{\partial p\partial t}$, the commutator of $\displaystyle \frac{\partial}{\partial\xi} $ and $\displaystyle \frac{\partial}{\partial t} $ is readily computed
\begin{equation}\label{huanwei}
	\frac{\partial^2}{\partial t\partial \xi}-\frac{\partial^2}{\partial \xi\partial t}=-\frac{1}{g}\frac{\partial g}{\partial t}\frac{\partial}{\partial\xi}.
\end{equation}
The evolution equations for centro-affine metric and curvature have been derived in  \cite{oqy,qy}
\begin{align}
	\frac{1}{g}\frac{\partial g}{\partial t}&=\frac{1}{2}\varphi ^2,\label{g_t}\\
	 \frac{\partial \varphi}{\partial t}&=\frac{1}{2}\varphi_{\xi\xi}-\frac{1}{2}\varphi^3+2\varphi.\label{phi_t}
\end{align}
The following proposition  states that we can investigate the centro-affine curve flow by studying the evolution equation for its centro-affine curvature.
\begin{prop}[\cite{qy}] There exists a time $T$ such that for $t\in[0,T)$, the centro-affine curve evolution process
	\begin{equation*}\label{Curve-ev}
		\frac{\partial C}{\partial t}=\Big(\int_{0}^{\xi}\varphi d\xi\Big)C+\frac{\varphi}{2}C_\xi,\qquad C(\cdot,0)=C_0(\cdot)
	\end{equation*}
	is equivalent to finding a solution to the PDE problem for $\varphi\in C^{3+\alpha,1+\alpha}(S^1\times[0,T))$ such that
	\begin{equation}\label{PDE-pro}
		\frac{\partial \varphi}{\partial t}=\frac{1}{2}\varphi_{\xi\xi}-\frac{1}{2}\varphi^3+2\varphi,\qquad
		\varphi(0)=\varphi(C_0)=\varphi_0,
	\end{equation}
	where $C_0(\cdot)$ is smooth and closed, and $\varphi_0$ is bounded, smooth and periodic.
\end{prop}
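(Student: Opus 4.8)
The plan is to prove the two directions of the stated equivalence separately and to extract the short-time existence from standard parabolic theory. The forward implication is computational: assuming $C(\cdot,t)$ solves the geometric flow on some interval, I would differentiate the defining relations \eqref{arc-c} and the formula for $\varphi$ in time, using the commutator identity \eqref{huanwei} to interchange $\partial_t$ and $\partial_\xi$. This reproduces the metric law \eqref{g_t} and the curvature evolution \eqref{phi_t}, the latter being exactly the equation in \eqref{PDE-pro}; the initial condition $\varphi(0)=\varphi_0$ holds by definition, and $\varphi\in C^{3+\alpha,1+\alpha}$ follows from smoothness of the flow. It is worth recording here that the nonlocal radial coefficient $\int_0^\xi\varphi\,d\xi$ acts only as a reparametrization and does not enter the curvature evolution, so that $\varphi$ obeys a genuinely local, semilinear parabolic equation.

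For the converse I would first secure existence of a solution to \eqref{PDE-pro} on a maximal interval $[0,T)$. Since $\xi$ is itself time dependent, I would transport the equation to the fixed parameter $p\in S^1$ via $\partial_\xi=g^{-1}\partial_p$, coupling it to the metric law \eqref{g_t}, which integrates explicitly to $g(p,t)=g(p,0)\exp\!\big(\tfrac12\int_0^t\varphi^2\,dt'\big)$. The resulting equation $\varphi_t=\tfrac12 g^{-1}\partial_p\big(g^{-1}\varphi_p\big)-\tfrac12\varphi^3+2\varphi$ is uniformly parabolic on $S^1$ for short time, with smooth lower-order terms, so linear Schauder estimates together with a contraction-mapping argument in $C^{3+\alpha,1+\alpha}(S^1\times[0,T))$ yield a unique solution $\varphi$, with $g$ recovered simultaneously.

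Given $\varphi$ and $g$, I would reconstruct the curve from the centro-affine structure equations. A short computation from \eqref{arc-c}, the defining formula for $\varphi$, and $\epsilon=1$ (Lemma \ref{lem-ep-1}) yields the Frenet-type identity $C_{\xi\xi}=-C+\varphi C_\xi$, which I pair with the prescribed time law $C_t=\big(\int_0^\xi\varphi\,d\xi\big)C+\tfrac{\varphi}{2}C_\xi$ to obtain an overdetermined linear system for the frame $(C,C_\xi)$ in the variables $(\xi,t)$. The compatibility condition $\partial_t\partial_\xi=\partial_\xi\partial_t$, made precise by \eqref{huanwei}, holds precisely because $\varphi$ solves \eqref{phi_t} and $g$ solves \eqref{g_t}; this integrability is the step where the PDE is used essentially. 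Integrating the compatible system with initial data $C_0$ then produces $C(\cdot,t)$, which by construction has curvature $\varphi$ and metric $g$, and therefore solves the original flow.

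The main obstacle, I expect, is not the local parabolic existence, which is routine once the equation is carried to the fixed parameter, but the verification that the reconstructed curve remains a closed embedded curve. Closedness is a monodromy condition for the linear spatial system: integrating $C_{\xi\xi}=-C+\varphi C_\xi$ once around $S^1$ must return the initial frame, and controlling this uses $\oint\varphi\,d\xi=0$ (Lemma \ref{meanzero}) together with the closedness of $C_0$. Since $C_0$ is closed and embedded and $\varphi$ stays close to $\varphi_0$ for small time, closedness and embeddedness persist on a possibly smaller interval by continuity, and uniqueness in both directions upgrades the correspondence to a genuine equivalence.
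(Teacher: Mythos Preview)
The paper does not prove this proposition; it is quoted from \cite{qy} and the surrounding text explicitly defers the details there (``refer to \cite{qy} for details''), so there is no in-paper argument to compare against. Your outline is a reasonable reconstruction of how such an equivalence is typically established: derive \eqref{phi_t} and \eqref{g_t} from the flow for one direction, and for the other transport the curvature PDE to the fixed parameter $p$, invoke standard quasilinear parabolic short-time existence, and then rebuild the curve from the centro-affine Frenet system $C_{\xi\xi}=-C+\varphi C_\xi$ together with the prescribed $t$-evolution, checking compatibility via \eqref{huanwei}, \eqref{g_t}, \eqref{phi_t}.

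One point that deserves more than the sentence you give it is closedness of the reconstructed curve. The monodromy of the linear system $C_{\xi\xi}=-C+\varphi C_\xi$ over one period is an element of $\SLR{2}$ (since the trace of the coefficient matrix is $\varphi$ and $\oint\varphi\,d\xi=0$ is Lemma~\ref{meanzero}), but unimodularity alone does not force it to be the identity; you need an argument that the monodromy, which equals the identity at $t=0$ because $C_0$ is closed, remains the identity under the flow. This is exactly where the time compatibility enters: the $t$-evolution of the frame $(C,C_\xi)$ conjugates the spatial monodromy by a fixed matrix, so it is constant in $t$. You gesture at this with ``continuity for small time'', but the correct mechanism is this conjugation invariance, not a perturbation argument. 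Apart from that, your plan is sound.
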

The short time existence of \eqref{PDE-pro} is evident (refer to  \cite{qy} for details),  and  the unique solvability of the flow \eqref{pC_t} for a small time has been studied in \cite{qy}. Let us next show  that this solution exists on $ [0,+\infty) $.
It is worth noting that Lemma \ref{meanzero} tells us that the centro-affine curvature of a closed curve is a periodic function with zero mean.  We thus conclude that
\begin{equation*}
	\varphi_{\max}(t):=\sup_{p}\{\varphi(p,t)\}\geq0, \qquad
	\varphi_{\min}(t):=\inf_{p}\{\varphi(p,t)\}\leq0
\end{equation*}
for $t \in [0, T )$.
From this, we can prove that
\begin{lem}\label{lem-bd1}
	If the centro-affine curvature $\varphi$ satisfies \eq{phi_t}  for $t \in [0, T )$, then  $\varphi$ is bounded in $[0, T )$. Furthermore, $\min\{-2,\varphi_{\min}(0)\}\leq\varphi(p,t)\leq\max\{2,\varphi_{\max}(0)\}$.
\end{lem}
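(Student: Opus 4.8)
The statement is a maximum-principle bound for the reaction--diffusion equation \eqref{phi_t}, so the natural strategy is to treat $\varphi_{\max}(t)$ and $\varphi_{\min}(t)$ as Lipschitz functions of $t$ and control them via the ODE obtained by dropping the diffusion term at an interior extremum. First I would establish the upper bound. Since $C(\cdot,t)$ is a closed curve, $\varphi(\cdot,t)$ is periodic in the spatial variable, so for each fixed $t$ the supremum $\varphi_{\max}(t)$ is attained at some point $p^{*}(t)$. At such a spatial maximum one has $\varphi_{\xi}=0$ and $\varphi_{\xi\xi}\le 0$, whence from \eqref{phi_t}
\begin{equation*}
\frac{d}{dt}\varphi_{\max}(t)\le \frac12\varphi_{\xi\xi}-\frac12\varphi_{\max}^{3}+2\varphi_{\max}\le -\frac12\varphi_{\max}^{3}+2\varphi_{\max}=\frac12\varphi_{\max}\bigl(4-\varphi_{\max}^{2}\bigr).
\end{equation*}

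**Extracting the bound from the ODE.**
The comparison ODE $\dot u=\tfrac12 u(4-u^{2})$ has $u=2$ as a stable equilibrium on the positive axis: the right-hand side is negative for $u>2$ and positive for $0<u<2$. Consequently the function $\max\{2,\varphi_{\max}(0)\}$ is a supersolution, and a standard comparison argument gives $\varphi_{\max}(t)\le\max\{2,\varphi_{\max}(0)\}$ for all $t\in[0,T)$. The lower bound is entirely symmetric: at a spatial minimum $\varphi_{\xi\xi}\ge 0$, so
\begin{equation*}
\frac{d}{dt}\varphi_{\min}(t)\ge -\frac12\varphi_{\min}^{3}+2\varphi_{\min}=\frac12\varphi_{\min}\bigl(4-\varphi_{\min}^{2}\bigr),
\end{equation*}
and since $\varphi_{\min}\le 0$ one checks that $u=-2$ plays the role of a stable equilibrium from above, yielding $\varphi_{\min}(t)\ge\min\{-2,\varphi_{\min}(0)\}$. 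Combining the two inequalities gives the two-sided bound, and boundedness on $[0,T)$ is immediate.

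**The main technical obstacle.**
The one genuine subtlety is justifying the differential inequality for $\varphi_{\max}(t)$ rigorously, since the location $p^{*}(t)$ of the maximum can jump and $t\mapsto\varphi_{\max}(t)$ need not be classically differentiable. The clean way around this is Hamilton's trick: for a $C^{1}$ family the function $\varphi_{\max}(t)$ is locally Lipschitz, hence differentiable almost everywhere, and at every point of differentiability its derivative equals $\partial_{t}\varphi$ evaluated at any point realizing the maximum; one then invokes the upper Dini derivative to run the comparison argument, so only a measure-zero set of bad times is involved. The spatial smoothness and periodicity of $\varphi$ guaranteed by the preceding proposition (the solution lies in $C^{3+\alpha,1+\alpha}(S^{1}\times[0,T))$) ensure the maximum is attained and that $\varphi_{\xi}=0$, $\varphi_{\xi\xi}\le 0$ hold there. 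I would also note that the sign information $\varphi_{\max}(t)\ge 0\ge\varphi_{\min}(t)$, already recorded in the excerpt via Lemma \ref{meanzero}, is what pins the stable equilibria at $\pm 2$ and makes the factorization $\tfrac12 u(4-u^{2})$ do the work; without the zero-mean constraint one could not rule out the unstable branches. Everything past this point is routine ODE comparison.
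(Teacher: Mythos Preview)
Your argument is correct and follows the same maximum-principle strategy as the paper. The paper's implementation differs only in packaging: instead of Hamilton's trick and ODE comparison, it uses a first-time touching argument --- if $\varphi$ ever attains a value $\beta<\varphi_{\min}(0)\le 0$, then at the first space-time point $(p_0,t_0)$ where this happens one has $\varphi_t\le 0$, $\varphi_p=0$, $\varphi_{pp}\ge 0$ (hence $\varphi_{\xi\xi}\ge 0$ via the chain rule), so \eqref{phi_t} forces $-\tfrac12\beta(\beta^2-4)\le 0$ and thus $\beta\ge -2$. The two implementations are interchangeable here; the paper's version sidesteps the a.e.-differentiability discussion you flagged, while yours is the more systematic geometric-flows formulation.
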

\begin{proof}
	Let us show that  $ \varphi(p,t)  $ has a lower bound. If $ \varphi(p,t) \ge\varphi_{\min}(0) $, the conclusion clearly holds. Otherwise, there exists  some $t>0$ such that  $\beta:=\varphi_{\min}(t)<\varphi_{\min}(0)\leq0$.  Suppose that  $\beta$ is achieved for the first time at
	$(p_0, t_0)$,  i.e., $ \varphi(p_0, t_0)=\beta $ and $\displaystyle t_0=\inf\{t:\varphi(t)=\beta\}.$ Then we can see that  $\varphi_t\leq0,$ $\varphi_{pp}\geq0$, $\varphi_p=0$ at $(p_0, t_0)$.
	Further,
	\begin{equation*}
		\varphi_{\xi\xi}=\varphi_{pp}\Big(\frac{dp}{d\xi}\Big)^2+\varphi_p\frac{d^2p}{d\xi^2}\geq0.
	\end{equation*}
Hence, it follows from  \eq{phi_t} that $-\frac{1}{2}\beta(\beta^2-4)\leq0$, which leads to $\beta\geq-2$.
The proof that  $ \varphi(p,t)  $ has an upper  bound can be obtained by a similar argument.
\end{proof}

\begin{lem}\label{lem-bd2}
	If $\varphi$ satisfies \eq{phi_t}  for $t \in [0, T )$, then $\displaystyle\frac{\partial^n \varphi}{\partial \xi^n} (n\geq0)$ are bounded in the interval.
\end{lem}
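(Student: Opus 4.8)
The plan is to bound every spatial derivative $\partial_\xi^n\varphi$ by a family of energy estimates resting on the pointwise bound for $\varphi$ furnished by Lemma \ref{lem-bd1}. First I would record, for each $n\ge 0$, the evolution equation of the iterated derivative $u_n:=\partial_\xi^n\varphi$. Differentiating \eqref{phi_t} in $\xi$ and repeatedly applying the commutator \eqref{huanwei}, which here reads $\partial_t\partial_\xi-\partial_\xi\partial_t=-\tfrac12\varphi^2\partial_\xi$ by \eqref{g_t}, gives the recursion $\partial_t u_n=\partial_\xi(\partial_t u_{n-1})-\tfrac12\varphi^2 u_n$, and hence
\[
\partial_t u_n=\tfrac12\,\partial_\xi^2 u_n+P_n(u_0,\dots,u_n),
\]
where $P_n$ is a polynomial whose every monomial has total derivative order exactly $n$ once one assigns weight $k$ to each factor $u_k$. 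In particular, for $n\ge 1$ the only term of $P_n$ of top spatial order is $(2-c_n\varphi^2)u_n$ with a positive numerical constant $c_n$, while all remaining monomials are products of derivatives of order strictly less than $n$.

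Next I would introduce the energy functionals $E_n(t):=\oint(\partial_\xi^n\varphi)^2\,d\xi$ over the closed curve and differentiate in time. Using $\frac{d}{dt}\oint f\,d\xi=\oint\big(f_t+\tfrac12\varphi^2 f\big)\,d\xi$ (a consequence of \eqref{g_t}) together with the evolution equation above and integration by parts on the closed curve, the leading term yields the favourable dissipation $\oint u_n\,\partial_\xi^2 u_n\,d\xi=-E_{n+1}$, so that
\[
\frac{dE_n}{dt}=-E_{n+1}+\oint\Big(2u_nP_n+\tfrac12\varphi^2u_n^2\Big)\,d\xi.
\]
The top-order part $(2-c_n\varphi^2)u_n^2$ and the contribution $\tfrac12\varphi^2u_n^2$ are both bounded by $C\,E_n$, since $\varphi$ is bounded by Lemma \ref{lem-bd1}.

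The heart of the argument, and the step I expect to be the main obstacle, is controlling the remaining nonlinear integral, whose integrand is a sum of monomials of total derivative order $2n$, strictly below the order $2(n+1)$ governed by $E_{n+1}$. The plan is to factor out the bounded factors $\varphi=u_0$ and then estimate the rest by Gagliardo--Nirenberg interpolation on the closed curve, which expresses the intermediate norms $\|\partial_\xi^k\varphi\|_{L^p}$ in terms of $\|\partial_\xi^{n+1}\varphi\|_{L^2}=E_{n+1}^{1/2}$ and $\|\varphi\|_{L^\infty}$ (with a lower-order additive term, as is unavoidable on a compact one-dimensional domain). Because the monomials are strictly subcritical, the exponent on $E_{n+1}^{1/2}$ stays below $2$, so Young's inequality absorbs each such term as $\tfrac12 E_{n+1}+C$, where $C$ depends only on the already-controlled quantities $\|\varphi\|_{L^\infty}$ and the centro-affine length $L$. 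Keeping $L$ under control is harmless: from $\frac{dL}{dt}=\tfrac12\oint\varphi^2\,d\xi\le\tfrac12\|\varphi\|_\infty^2\,L$ and Lemma \ref{lem-bd1}, Gronwall's inequality keeps $L$ finite on $[0,T)$. Doing this interpolation bookkeeping cleanly for a general monomial is the delicate part of the proof.

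Combining these estimates gives the differential inequality $\frac{dE_n}{dt}\le-\tfrac12 E_{n+1}+C(1+E_n)$; discarding the nonpositive dissipation term and applying Gronwall's inequality shows that each $E_n$ is bounded on $[0,T)$. Finally I would convert these $L^2$ bounds into the desired pointwise bounds via the one-dimensional Sobolev embedding on the closed curve, $\|\partial_\xi^n\varphi\|_{L^\infty}\le C\big(E_n^{1/2}+E_{n+1}^{1/2}\big)$, whose right-hand side is now bounded. This yields that every $\partial_\xi^n\varphi$ is bounded on $[0,T)$, as asserted.
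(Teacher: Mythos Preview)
Your proposal is correct, but it follows a substantially heavier route than the paper's. The paper proves Lemma~\ref{lem-bd2} by a direct pointwise argument: it records that each $u_n:=\partial_\xi^n\varphi$ satisfies a linear parabolic equation
\[
\partial_t u_n=\tfrac12\,\partial_\xi^2 u_n+\Big(2-\tfrac{n+3}{2}\varphi^2\Big)u_n+\text{(terms already bounded by induction on }n\text{)},
\]
so with $\varphi$ bounded by Lemma~\ref{lem-bd1} the maximum principle (``standard parabolic theory'') immediately gives at most exponential growth of $\sup|u_n|$, hence boundedness on any finite $[0,T)$. No energies, interpolation, or Sobolev embedding are invoked. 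By contrast, you pass through the $L^2$ hierarchy $E_n=\oint u_n^2\,d\xi$, control the cubic nonlinearity by Gagliardo--Nirenberg and Young, close with Gronwall, and then upgrade to pointwise bounds via Sobolev. This is sound---and your observation that $L(t)$ stays finite via $dL/dt\le \tfrac12\|\varphi\|_\infty^2 L$ handles the scale-dependence of the interpolation constants---but it is considerably more elaborate than needed here. What your approach buys is robustness: it is essentially the machinery the paper itself sets up later in Section~\ref{EE} (Lemmas~\ref{zk}--\ref{gj} and Proposition~\ref{nee}) to obtain \emph{time-uniform} bounds, where the maximum principle alone would not suffice. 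For the present lemma, which only asks for bounds on a finite interval, the paper's induction-plus-maximum-principle argument is the more economical choice.
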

\begin{proof}
	First, by \eqref{phi_t} and \eqref{huanwei}, the evolution equation for $\displaystyle\frac{\partial\varphi}{\partial\xi}$ can be easily computed:
	\begin{equation}\label{xi_t}
	\frac{\partial}{\partial t}\Big(\frac{\partial\varphi}{\partial\xi}\Big)
	=\frac{1}{2}\frac{\partial^2}{\partial\xi^2}\Big(\frac{\partial\varphi}{\partial\xi}\Big)
	+(2-2\varphi^2)\frac{\partial\varphi}{\partial\xi}.
\end{equation}
	Applying the standard theory on parabolic equations.
	The above equation bounds the rate of growth of $\displaystyle\frac{\partial\varphi}{\partial\xi}$ to exponential.  Therefore,  $\displaystyle\frac{\partial\varphi}{\partial\xi}$ is bounded for finite time. In general, a similar computation gives
	\begin{equation*}
		\frac{\partial}{\partial t}\Big(\frac{\partial^n\varphi}{\partial\xi^n}\Big)
		=\frac{1}{2}\frac{\partial^2}{\partial\xi^2}\Big(\frac{\partial^n\varphi}{\partial\xi^n}\Big)
		+\Big(2-\frac{n+3}{2}\varphi^2\Big)\frac{\partial^n\varphi}{\partial\xi^n}+\text{previously bounded terms}.
	\end{equation*}
	Thus, $\displaystyle\frac{\partial^n\varphi}{\partial\xi^n}$ grows not faster than
	exponentially, and it is bounded for finite time.
\end{proof}
We now prove the following theorem, which demonstrates that the flow
\eqref{pC_t} is nonlocal.
\begin{thm}\label{thm-globel} The solution to the PDE problem \eq{PDE-pro} (and so to the centro-affine curve evolution process) continues as $t\to+\infty$.
\end{thm}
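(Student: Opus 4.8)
The plan is to argue by contradiction, showing that the maximal existence interval $[0,T)$ furnished by the short-time theory cannot have $T<+\infty$. So suppose $T<+\infty$. The whole argument rests on promoting the two a priori estimates already in hand to uniform control up to the putative blow-up time $T$, and then invoking the short-time existence result once more, now with the limiting profile as new initial data, to contradict the maximality of $T$.

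First I would record that Lemma~\ref{lem-bd1} gives a bound on $\varphi$ that is uniform in $t$, namely $\min\{-2,\varphi_{\min}(0)\}\le\varphi\le\max\{2,\varphi_{\max}(0)\}$; write $M$ for a constant dominating $|\varphi|$ on $S^1\times[0,T)$. This is the only genuinely nonlinear input: it is the favourable sign of the cubic term $-\tfrac12\varphi^3$ in \eqref{phi_t} that prevents reaction-driven blow-up through the maximum principle, and it is precisely what a semilinear heat equation with a cubic source would in general lack.

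Second I would control the geometry so that the scalar estimates transfer to the curve. From \eqref{g_t} one has $\partial_t\log g=\tfrac12\varphi^2\le\tfrac12 M^2$, so on the finite interval $[0,T)$ the metric $g$ and its reciprocal stay pinched between two positive constants; in particular the arc-length reparametrisation $p\mapsto\xi$ remains nondegenerate, the regularity condition $[C_i,C_{i+1}]\neq0$ is preserved, and the total length $L(t)=\oint g\,dp$ stays bounded. Since $|\varphi|\le M$, the nonlocal coefficient then obeys $\big|\int_0^\xi\varphi\,d\xi\big|\le M\,L(t)$, which is bounded on $[0,T)$, so the velocity field driving $C$ in \eqref{pC_t} is bounded and $C(\cdot,t)$ can neither escape to infinity nor collapse in finite time. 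Next, Lemma~\ref{lem-bd2} shows that each $\partial_\xi^n\varphi$ grows at most exponentially, hence is bounded on the finite interval $[0,T)$; combined with the two-sided bound on $g$, this yields uniform bounds on $\partial_p^n\varphi$ and, via \eqref{phi_t}, on the time derivatives as well. Consequently $\varphi(\cdot,t)$ is Cauchy in $C^{3+\alpha}(S^1)$ as $t\to T^-$ and converges to a smooth periodic profile $\varphi(\cdot,T)$. Feeding this profile as initial data into the short-time existence result (the Proposition preceding \eqref{PDE-pro}) continues the solution to $[0,T+\delta)$ for some $\delta>0$, contradicting the maximality of $T$; hence $T=+\infty$.

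I expect the only delicate point to be the transfer of estimates between the two parametrisations: one must keep $g$ uniformly bounded above and below in order to convert the $\xi$-derivative bounds of Lemma~\ref{lem-bd2} into bounds in the fixed space $C^{3+\alpha,1+\alpha}(S^1\times[0,T))$ in which the short-time theory was formulated, and to guarantee that the reconstructed curve $C(\cdot,t)$ remains regular and embedded up to time $T$. Everything else is a routine parabolic continuation argument, built entirely on the uniform $L^\infty$ bound of Lemma~\ref{lem-bd1}.
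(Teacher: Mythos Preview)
Your proof is correct and follows essentially the same continuation argument as the paper: invoke Lemmas~\ref{lem-bd1} and~\ref{lem-bd2} for uniform bounds on $\varphi$ and all $\varphi_{\xi^n}$, read off time-derivative bounds from \eqref{phi_t}, pass to a smooth limiting profile at $t=T$, and reapply the short-time theory to contradict maximality. You supply more detail than the paper does---notably the two-sided control of $g$ from \eqref{g_t} and the transfer between the $\xi$- and $p$-parametrisations---but these simply make explicit what the paper's three-line proof leaves to the reader.
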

\begin{proof}
	By Lemma \ref{lem-bd1} and Lemma \ref{lem-bd2}, $\displaystyle\varphi$ and all its derivatives $\displaystyle\frac{\partial^n\varphi}{\partial\xi^n}$ are bounded. Using \eq{phi_t},
	bounds on the time derivatives can also be obtained.
	Thanks to the short-term results, we can assume the solution to \eqref{PDE-pro} exists on the interval  $ [0,T) $. Then $\varphi(t)$
	has a limit as $t\to T$, and the limiting curve $C(T)$ is smooth. Again, from the short time results, the solution $\varphi(t)$ and the corresponding  curve $C(t)$
	exist for some interval $[T,T+\epsilon),\;\epsilon>0$. This process can be continued.
\end{proof}

\section{Energy estimates}\label{EE}
In this section, we will derive some energy inequalities for the flow  \eqref{pC_t}.  The energy here  is defined as
 \begin{equation*}
 	E=\displaystyle\oint_C\varphi^2(\xi,t)d\xi.
 \end{equation*}
Before proceeding further, let us first review the interpolation inequality, which will be repeatedly used  relying on the fact in Lemma \ref{meanzero} that $\displaystyle\oint_C\varphi d\xi=0 $.
It states that for periodic function $ u $ with zero mean, the following inequality
\begin{equation*}\label{cz}
	\big\|u^{(j)}\big\|_{L^r}\le c\big\|u\big\|_{L^p}^{1-\theta}\cdot\big\|u^{(k)}\big\|_{L^q}^\theta, \quad \theta\in(0,1)
\end{equation*}
is valid, where $ j,k,p,q $ and $ r$  satisfy $ p,q,r>1,\; j\ge0,  $
\begin{equation*}
	\frac{1}{r}=j+\theta\left(\frac{1}{q}-k\right)+(1-\theta)\frac{1}{p}, \qquad \frac{j}{k}\le \theta\le 1,
\end{equation*}
and the constant $ c $ depends on $ j,k,p,q $  and $ r $ only.

Let us now get to the point.
For brevity, we employ the more compact notation  $ \displaystyle\varphi_{\xi^n} $ to indicate $\displaystyle\frac{\partial^n\varphi}{\partial\xi^n} $ for every integer $ n\ge0 $.
Firstly, by \eqref{g_t} and \eqref{phi_t}, it is easy to compute directly that
\begin{equation*}\label{0j}
	\frac{d}{dt}E=-\oint_C\varphi_\xi^2d\xi-\frac{1}{2}\oint_C\varphi^4d\xi+4E.
\end{equation*}
Then, applying the interpolation inequality to estimate the second term on the right-hand, we deduce that
\begin{equation*}\label{nl}
	\oint_C\varphi^4d\xi  \le cE^\frac{3}{2}\left(\oint_C\varphi_\xi^2 d\xi\right)^\frac{1}{2}\le
	 \epsilon\oint_C\varphi_\xi^2d\xi+\frac{1}{2}(2\epsilon)^{-1}c^2E^3,
\end{equation*}
from which it follows that
\begin{equation*}
	\frac{d}{dt}E\le\left(-1+\frac{1}{2}\epsilon\right)\oint_C\varphi_\xi^2d\xi+p(E),  \quad \text{where} \quad	p(E) = 4E+\frac{1}{8\epsilon}c^2E^3.
\end{equation*}
Thus,  choosing $ \epsilon=2 $, we conclude that
\begin{equation}\label{0jgj}
	\frac{d}{dt}E\le c_1(E+E^3),
\end{equation}
where $ c_1 $ is a constant.

Further,  the  more general inequality can  be established by a similar procedure. To do this, we introduce the notation  $ \displaystyle\varphi_{\xi^{i_1}}*\varphi_{\xi^{i_2}}*\cdot\cdot\cdot*\varphi_{\xi^{i_m}} $, where $\displaystyle i_1\le i_2\le \cdot\cdot\cdot\le i_m, $ to denote the term  $\displaystyle K\varphi_{\xi^{i_1}}\varphi_{\xi^{i_2}}\cdot\cdot\cdot\varphi_{\xi^{i_m}} $ in which the specific value of non-zero constant $ K $ is not required.  Employing this notation, we can rewrite the previous evolution equation in the form
\begin{align}\label{phit}
	\varphi_t=\frac{1}{2}\varphi_{\xi\xi}+\varphi*\varphi*\varphi+2\varphi,\qquad
	\varphi_{\xi t}=\frac{1}{2}\varphi_{\xi^3}+\varphi*\varphi*\varphi_\xi+2\varphi_\xi.
\end{align}
Now, let us generalize the inequality \eqref{0jgj}. Indeed, the general energy inequality reads as follows.
\begin{prop}\label{nee}
	For every nonnegative integer $n$, we have
	\begin{equation}\label{varphint}
		\frac{d}{dt}\oint_{C}\varphi_{\xi^n}^2d\xi \le c(E+E^{2n+3}),
	\end{equation}   where $ c $ is a positive constant.
\end{prop}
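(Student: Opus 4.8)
The plan is to estimate $\frac{d}{dt}\oint_C\varphi_{\xi^n}^2\,d\xi$ directly for each fixed $n$, parallel to the computation that produced \eqref{0jgj} in the case $n=0$. The first task is to record the evolution equation satisfied by $\varphi_{\xi^n}$. Starting from \eqref{phit} and differentiating repeatedly in $\xi$, each application of $\partial_\xi$ commutes with $\partial_t$ only up to the commutator \eqref{huanwei}, which together with \eqref{g_t} reads $\partial_t\partial_\xi-\partial_\xi\partial_t=-\tfrac12\varphi^2\partial_\xi$. Since the correction $-\tfrac12\varphi^2\varphi_{\xi^m}$ is again a cubic term of the type $\varphi*\varphi*\varphi_{\xi^m}$, an easy induction on the number of differentiations gives
\[
\varphi_{\xi^n t}=\tfrac12\varphi_{\xi^{n+2}}+2\varphi_{\xi^n}+\sum_{i+j+k=n}\varphi_{\xi^i}*\varphi_{\xi^j}*\varphi_{\xi^k},
\]
where every cubic monomial carries total derivative order exactly $n$.

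Next I would differentiate the energy in time. Because $d\xi=g\,dp$ and $\tfrac1g g_t=\tfrac12\varphi^2$ by \eqref{g_t}, the measure contributes a factor $\tfrac12\varphi^2\,d\xi$, so that
\[
\frac{d}{dt}\oint_C\varphi_{\xi^n}^2\,d\xi=2\oint_C\varphi_{\xi^n}\varphi_{\xi^n t}\,d\xi+\tfrac12\oint_C\varphi^2\varphi_{\xi^n}^2\,d\xi.
\]
Substituting the evolution equation and integrating the top-order contribution by parts over the closed curve (no boundary terms), the leading piece $\oint_C\varphi_{\xi^n}\varphi_{\xi^{n+2}}\,d\xi=-\oint_C\varphi_{\xi^{n+1}}^2\,d\xi$ is a favorable \emph{negative} term. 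Thus
\[
\frac{d}{dt}\oint_C\varphi_{\xi^n}^2\,d\xi=-\oint_C\varphi_{\xi^{n+1}}^2\,d\xi+4\oint_C\varphi_{\xi^n}^2\,d\xi+\oint_C\Big(\varphi^2\varphi_{\xi^n}^2+\varphi_{\xi^n}\!\!\sum_{i+j+k=n}\!\!\varphi_{\xi^i}*\varphi_{\xi^j}*\varphi_{\xi^k}\Big)d\xi.
\]

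The heart of the argument is then to dominate every term on the right, apart from the good negative one, by $\epsilon\oint_C\varphi_{\xi^{n+1}}^2\,d\xi+c(\epsilon)\bigl(E+E^{2n+3}\bigr)$; choosing $\epsilon<1$ absorbs the gradient term and yields \eqref{varphint}. Each remaining term is a monomial in $\varphi$-derivatives with four factors whose orders sum to $2n$ (the purely linear term $4\oint_C\varphi_{\xi^n}^2\,d\xi$ is handled the same way, interpolating between $E$ and $\oint_C\varphi_{\xi^{n+1}}^2\,d\xi$). To estimate such a monomial I would split it by H\"older into $L^4$ norms of the individual factors and apply the recalled interpolation inequality to each, with base norm $\|\varphi\|_{L^2}=E^{1/2}$ and top norm $\|\varphi_{\xi^{n+1}}\|_{L^2}$; this is legitimate since $\varphi$ and all its $\xi$-derivatives have zero mean by Lemma \ref{meanzero}. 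Bookkeeping the exponents, the total power of $\|\varphi_{\xi^{n+1}}\|_{L^2}$ produced is strictly below $2$, and the accompanying power of $E$ comes out so that Young's inequality converts the product into $\epsilon\oint_C\varphi_{\xi^{n+1}}^2\,d\xi+c(\epsilon)E^{2n+3}$. For the dominant term $\oint_C\varphi^2\varphi_{\xi^n}^2\,d\xi$, for instance, one finds $\|\varphi_{\xi^{n+1}}\|_{L^2}$ to the power $\tfrac{2n+1}{n+1}<2$ against $E$ to the power $\tfrac{2n+3}{2(n+1)}$, and Young with conjugate exponent $2(n+1)$ returns precisely $E^{2n+3}$; this is the $n$-analogue of $\oint_C\varphi^4\,d\xi\le cE^{3/2}\bigl(\oint_C\varphi_\xi^2\,d\xi\bigr)^{1/2}$ used for $n=0$.

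I expect the main obstacle to be the combinatorial bookkeeping: verifying that \emph{every} cubic-type monomial arising from the evolution equation, not just the dominant one, yields a power of $\|\varphi_{\xi^{n+1}}\|_{L^2}$ below $2$ (so Young applies and the gradient term is absorbable) and a power of $E$ at most $2n+3$. The scaling heuristic makes this plausible and pins down the exponent: all these monomials share the same homogeneity—four factors and $2n$ derivatives—as both $\oint_C\varphi^2\varphi_{\xi^n}^2\,d\xi$ and $\epsilon\oint_C\varphi_{\xi^{n+1}}^2\,d\xi+cE^{2n+3}$. Turning the heuristic into a uniform estimate requires checking the admissibility constraint $\tfrac{j}{k}\le\theta\le1$ of the interpolation inequality for each factor, and observing that any resulting power $E^m$ with $1\le m\le 2n+3$ satisfies $E^m\le E+E^{2n+3}$. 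Once these term-by-term estimates are in hand, summing them and choosing $\epsilon$ small enough gives \eqref{varphint}.
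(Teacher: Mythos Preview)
Your proposal is correct and follows essentially the same route as the paper: the paper packages the three steps you describe into Lemmas \ref{xint}, \ref{zk}, and \ref{gj} (evolution of $\varphi_{\xi^n}$, the identity for $\frac{d}{dt}\oint_C\varphi_{\xi^n}^2\,d\xi$, and the H\"older/interpolation/Young estimate for the quartic monomials), and then combines them exactly as you outline, choosing $\epsilon$'s so that the coefficient of $\oint_C\varphi_{\xi^{n+1}}^2\,d\xi$ vanishes. One small correction: only the zero mean of $\varphi$ itself (Lemma \ref{meanzero}) is needed for the interpolation inequality, since you are taking $u=\varphi$ with base norm $\|\varphi\|_{L^2}$; the derivatives enter as $u^{(j)}$, not as separate functions requiring their own mean-zero condition.
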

The proof of Proposition \ref{nee} relies on the following three elementary lemmas.
\begin{lem}\label{xint}
	 For an arbitrary nonnegative integer $n$, there holds
\begin{equation*}
	\varphi_{\xi^n t}=\frac{1}{2}\varphi_{\xi^{n+2}}+\sum_{i_1+i_2+i_3=n}^{}\varphi_{\xi^{i_1}}*\varphi_{\xi^{i_2}}*\varphi_{\xi^{i_3}}+2\varphi_{\xi^n}.
\end{equation*}
\end{lem}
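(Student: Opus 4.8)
The plan is to argue by induction on $n$, using the commutator relation \eqref{huanwei} together with \eqref{g_t} to push the time derivative past the arc-length derivative at each step. The base case $n=0$ is precisely the evolution equation \eqref{phi_t} rewritten in the $*$-notation: the only triple with $i_1+i_2+i_3=0$ is $(0,0,0)$, so the sum collapses to $\varphi*\varphi*\varphi$, which absorbs the term $-\frac{1}{2}\varphi^3$ (the unspecified constant being exactly $-\tfrac12$), while $\frac{1}{2}\varphi_{\xi\xi}$ and $2\varphi$ match the two outer terms.

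For the inductive step, I would first record the operator identity obtained by combining \eqref{huanwei} and \eqref{g_t}, namely $\partial_t\partial_\xi=\partial_\xi\partial_t-\frac{1}{2}\varphi^2\,\partial_\xi$. Applying this to $\varphi_{\xi^n}$ gives
\begin{equation*}
\varphi_{\xi^{n+1}t}=\partial_\xi\big(\varphi_{\xi^n t}\big)-\frac{1}{2}\varphi^2\,\varphi_{\xi^{n+1}}.
\end{equation*}
Then I would substitute the induction hypothesis for $\varphi_{\xi^n t}$ and differentiate it once in $\xi$. The linear terms behave transparently: $\frac{1}{2}\varphi_{\xi^{n+2}}$ becomes $\frac{1}{2}\varphi_{\xi^{n+3}}$ and $2\varphi_{\xi^n}$ becomes $2\varphi_{\xi^{n+1}}$, matching the two outer terms of the claimed formula at level $n+1$.

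The heart of the argument is the bookkeeping for the cubic $*$-terms. Differentiating a typical summand $\varphi_{\xi^{i_1}}\varphi_{\xi^{i_2}}\varphi_{\xi^{i_3}}$ with $i_1+i_2+i_3=n$ by the Leibniz rule produces three products, each raising exactly one index by one, so every resulting product has total index $n+1$ and is therefore of the form $\varphi_{\xi^{j_1}}*\varphi_{\xi^{j_2}}*\varphi_{\xi^{j_3}}$ with $j_1+j_2+j_3=n+1$. The leftover commutator contribution $-\frac{1}{2}\varphi^2\varphi_{\xi^{n+1}}$ is the single product $\varphi_{\xi^0}\varphi_{\xi^0}\varphi_{\xi^{n+1}}$, whose index sum is again $n+1$, so it too is a $*$-term of the required type. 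Collecting these shows the full cubic part is a sum of $*$-terms over $i_1+i_2+i_3=n+1$, completing the induction.

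Since the $*$-notation suppresses the numerical constants, no coefficient tracking is needed, and the only real obstacle is verifying that both sources of cubic terms, the Leibniz expansion and the commutator term, land in the same index-sum class $n+1$. I expect this closure property to be the point requiring the most care, though it follows immediately from the observation that $\partial_\xi$ raises the total index of any monomial by exactly one.
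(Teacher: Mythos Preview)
Your proposal is correct and follows essentially the same approach as the paper: both proofs proceed by induction on $n$, use the commutator identity $\varphi_{\xi^{n+1}t}=\partial_\xi(\varphi_{\xi^n t})-\frac{1}{2}\varphi^2\varphi_{\xi^{n+1}}$ obtained from \eqref{huanwei} and \eqref{g_t}, substitute the induction hypothesis, and observe that the Leibniz expansion together with the extra commutator term all land in the class of $*$-terms with total index $n+1$. Your write-up is in fact more explicit than the paper's about why the cubic bookkeeping closes up.
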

\begin{proof}
	The proof is by complete induction on $ n $. Obviously, it follows from \eqref{phit} that the equation holds for $ n=0 $.
	Assume that this holds for some $ n\ge0 $, then a direct computation gives
	\begin{align*}
		\varphi_{\xi^{n+1}t} & =\varphi_{\xi^n t \xi}-\frac{g_t}{g}\varphi_{\xi^{n+1}} \\ & =\frac{\partial}{\partial\xi}\left(\frac{1}{2}\varphi_{\xi^{n+2}}+\sum_{i_1+i_2+i_3=n}^{}\varphi_{\xi^{i_1}}*\varphi_{\xi^{i_2}}*\varphi_{\xi^{i_3}}+2\varphi_{\xi^n}\right)-\frac{1}{2}\varphi^2\varphi_{\xi^{n+1}} \\ & =\frac{1}{2}\varphi_{\xi^{n+3}}+\sum_{j_1+j_2+j_3=n+1}^{}\varphi_{\xi^{j_1}}*\varphi_{\xi^{j_2}}*\varphi_{\xi^{j_3}}+2\varphi_{\xi^{n+1}},
	\end{align*}
	which completes the inductive step, hence our proof of this proposition.
\end{proof}

\begin{lem}\label{zk}For every integer $n\ge0$, we have
	\begin{align*}
		\frac{d}{dt} \oint_{C}\varphi_{\xi^n}^2d\xi=&-\oint_C\varphi_{\xi^{n+1}}^2d\xi+\sum_{i_1+i_2+i_3=n}^{}\oint_C\varphi_{\xi^{i_1}}*\varphi_{\xi^{i_2}}*\varphi_{\xi^{i_3}}*\varphi_{\xi^n}d\xi\\
            &\qquad\qquad\qquad\qquad\qquad\qquad\qquad\qquad+4\oint_C\varphi_{\xi^n}^2d\xi.
	\end{align*}
\end{lem}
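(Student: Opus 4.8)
The plan is to differentiate the energy-type integral $\oint_C \varphi_{\xi^n}^2\,d\xi$ directly, being careful that the arc-length measure $d\xi = g\,dp$ is itself time-dependent. Writing the integral over the fixed parameter $p$ as $\oint \varphi_{\xi^n}^2\, g\,dp$ and differentiating under the integral sign produces two contributions: one from $\partial_t(\varphi_{\xi^n}^2) = 2\varphi_{\xi^n}\varphi_{\xi^n t}$ and one from $\partial_t g = g\cdot\tfrac{g_t}{g}$. Using the metric evolution \eqref{g_t}, namely $\tfrac{g_t}{g} = \tfrac12\varphi^2$, and converting back to the $d\xi$ measure, this gives
\[
\frac{d}{dt}\oint_C \varphi_{\xi^n}^2\,d\xi = \oint_C 2\varphi_{\xi^n}\varphi_{\xi^n t}\,d\xi + \frac12\oint_C \varphi^2\varphi_{\xi^n}^2\,d\xi.
\]

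Next I would substitute the evolution equation for $\varphi_{\xi^n}$ from Lemma \ref{xint}. The linear piece $2\varphi_{\xi^n}$ contributes $4\oint_C\varphi_{\xi^n}^2\,d\xi$ after multiplication by $2\varphi_{\xi^n}$, producing the desired last term. The cubic piece $\sum_{i_1+i_2+i_3=n}\varphi_{\xi^{i_1}}*\varphi_{\xi^{i_2}}*\varphi_{\xi^{i_3}}$, once multiplied by $2\varphi_{\xi^n}$, is exactly of the form $\sum_{i_1+i_2+i_3=n}\varphi_{\xi^{i_1}}*\varphi_{\xi^{i_2}}*\varphi_{\xi^{i_3}}*\varphi_{\xi^n}$, the extra factor of $2$ being harmlessly absorbed into the unspecified constant in the $*$-notation. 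The remaining contribution is the principal term $\oint_C \varphi_{\xi^n}\varphi_{\xi^{n+2}}\,d\xi$; integrating by parts once and discarding the boundary term by periodicity of $\varphi$ on the closed curve yields $-\oint_C\varphi_{\xi^{n+1}}^2\,d\xi$, which is the leading term in the claimed identity.

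The one point that requires care --- and the only real obstacle --- is the leftover measure term $\tfrac12\oint_C\varphi^2\varphi_{\xi^n}^2\,d\xi$ coming from the evolving arc-length. I would dispose of it by observing that $\tfrac12\varphi^2\varphi_{\xi^n}^2 = \tfrac12\,\varphi_{\xi^0}*\varphi_{\xi^0}*\varphi_{\xi^n}*\varphi_{\xi^n}$ is precisely the summand indexed by $(i_1,i_2,i_3) = (0,0,n)$ in $\sum_{i_1+i_2+i_3=n}\varphi_{\xi^{i_1}}*\varphi_{\xi^{i_2}}*\varphi_{\xi^{i_3}}*\varphi_{\xi^n}$, since $0+0+n=n$. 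Thus it merges into the cubic sum (again modifying only the hidden constant), and assembling the three pieces gives exactly the stated formula. I expect no genuine difficulty beyond this bookkeeping, provided the time-dependence of $d\xi$ is tracked correctly from the outset; overlooking it would drop the $\tfrac12\varphi^2\varphi_{\xi^n}^2$ contribution and break the identity.
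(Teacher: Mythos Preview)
Your proposal is correct and follows essentially the same approach as the paper's own proof: differentiate under the integral with the time-dependent measure $d\xi = g\,dp$, apply Lemma~\ref{xint} and the metric evolution $g_t/g = \tfrac12\varphi^2$, integrate the principal term by parts, and absorb the leftover $\tfrac12\oint\varphi^2\varphi_{\xi^n}^2\,d\xi$ into the $*$-sum as the $(0,0,n)$ summand. There is nothing to add.
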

\begin{proof}
	Applying Lemma \ref{xint} and integration by parts, it is not hard to see
	\begin{align*}
		\frac{d}{dt}\oint_{C}\varphi_{\xi^n}^2d\xi  =&2\oint_C\varphi_{\xi^n}\varphi_{\xi^nt}d\xi+\oint_C\varphi_{\xi^n}^2\frac{g_t}{g}d\xi \\=&\oint_C\varphi_{\xi^n}\varphi_{\xi^{n+2}}d\xi+\sum_{i_1+i_2+i_3=n}^{}\oint_C\varphi_{\xi^{i_1}}*\varphi_{\xi^{i_2}}*\varphi_{\xi^{i_3}}*\varphi_{\xi^n}d\xi\\
&\qquad\qquad\qquad\qquad\qquad\qquad+4\oint_C\varphi_{\xi^n}^2d\xi+\frac{1}{2}\varphi^2\varphi_{\xi^n}^2d\xi\\ =&-\oint_C\varphi_{\xi^{n+1}}^2d\xi+\sum_{i_1+i_2+i_3=n}^{}\oint_C\varphi_{\xi^{i_1}}*\varphi_{\xi^{i_2}}*\varphi_{\xi^{i_3}}*\varphi_{\xi^n}d\xi\\&
\qquad\qquad\qquad\qquad\qquad\qquad+4\oint_C\varphi_{\xi^n}^2d\xi.
	\end{align*}
	Hence we complete the proof.
\end{proof}
\begin{lem}\label{gj}
	For every integer $ m\ge1 $ and $ n \ge 0 $, it follows
	\begin{equation*}
		\oint_C \left|\varphi_{\xi^{i_1}}\varphi_{\xi^{i_2}}\cdot\cdot\cdot\varphi_{\xi^{i_m}}\right| d\xi \le \epsilon\displaystyle\oint_C\varphi_{\xi^k}^2d\xi +c^{\frac{1}{A}}A\left(\frac{4k}{2n+m-2}\epsilon\right)^{B}E^D,
	\end{equation*}
	where $\displaystyle i_1+i_2+\cdot\cdot\cdot+i_m=n
	 $,  $\displaystyle A=\frac{4k-2n-m+2}{4k}$, $\displaystyle B=\frac{2-2n-m}{4k-2n-m+2},$  $ \displaystyle D=\frac{2km-2n-m+2}{4k-2n-m+2}$,  $  \epsilon $ and $ c $ are some constants.
\end{lem}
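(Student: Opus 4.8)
The plan is to combine H\"older's inequality, the Gagliardo--Nirenberg interpolation inequality recalled at the beginning of this section, and a weighted Young's inequality, applied in that order. First I would use H\"older's inequality to distribute the $m$ factors: choosing exponents $p_1,\dots,p_m>1$ with $\sum_{l=1}^m \frac{1}{p_l}=1$, one obtains
\[
\oint_C \big|\varphi_{\xi^{i_1}}\cdots\varphi_{\xi^{i_m}}\big|\,d\xi \le \prod_{l=1}^m \big\|\varphi_{\xi^{i_l}}\big\|_{L^{p_l}}.
\]
Since $\varphi$ is periodic with zero mean by Lemma \ref{meanzero}, the interpolation inequality applies to each factor with $j=i_l$, $p=q=2$, $r=p_l$, yielding $\big\|\varphi_{\xi^{i_l}}\big\|_{L^{p_l}} \le c\,\|\varphi\|_{L^2}^{1-\theta_l}\,\big\|\varphi_{\xi^k}\big\|_{L^2}^{\theta_l}$, where the compatibility relation $\frac{1}{p_l}=i_l+\frac12-\theta_l k$ forces $\theta_l=\frac{i_l+1/2-1/p_l}{k}$.

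The crucial bookkeeping observation is that the individual choices of the $p_l$ are irrelevant to the final exponent. Indeed, summing the relations for $\theta_l$ and using $\sum_l \frac{1}{p_l}=1$ together with $\sum_l i_l=n$ gives the single quantity
\[
\Theta:=\sum_{l=1}^m \theta_l=\frac{n+m/2-1}{k}=\frac{2n+m-2}{2k}.
\]
Writing $\|\varphi\|_{L^2}^2=E$ and abbreviating $X:=\oint_C \varphi_{\xi^k}^2\,d\xi$, and absorbing the $m$ interpolation constants into a single generic constant $c$, the whole product collapses to $\oint_C \big|\varphi_{\xi^{i_1}}\cdots\varphi_{\xi^{i_m}}\big|\,d\xi \le c\,E^{\frac12(m-\Theta)}\,X^{\Theta/2}$.

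Finally I would peel off the leading term by Young's inequality with conjugate exponents $\frac{2}{\Theta}$ and $\frac{1}{A}$; a direct check shows $A=1-\frac{\Theta}{2}=\frac{4k-2n-m+2}{4k}$, so these are genuinely conjugate. Applying Young to $c\,E^{(m-\Theta)/2}X^{\Theta/2}=\big(\lambda X^{\Theta/2}\big)\cdot\big(\lambda^{-1}c\,E^{(m-\Theta)/2}\big)$ with the scaling $\lambda$ tuned so that the coefficient of $X$ equals $\epsilon$ produces $\epsilon X$ plus $A\,\lambda^{-1/A}\big(c\,E^{(m-\Theta)/2}\big)^{1/A}$. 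The tuning gives $\lambda^{2/\Theta}=\frac{4k}{2n+m-2}\epsilon$, whence $\lambda^{-1/A}=\big(\tfrac{4k}{2n+m-2}\epsilon\big)^{B}$ with $B=-\frac{\Theta}{2A}=\frac{2-2n-m}{4k-2n-m+2}$, while $\big(E^{(m-\Theta)/2}\big)^{1/A}=E^{D}$ with $D=\frac{m-\Theta}{2A}=\frac{2km-2n-m+2}{4k-2n-m+2}$. Collecting these gives exactly the claimed bound $\epsilon X + c^{1/A}A\big(\tfrac{4k}{2n+m-2}\epsilon\big)^{B}E^{D}$.

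The main obstacle, and essentially the only place demanding care, is the admissibility of the interpolation steps. The constraints $\frac{i_l}{k}\le\theta_l\le 1$ and $p_l>1$ translate into $p_l\ge 2$ and $k$ sufficiently large, so I would verify that $A>0$ (equivalently $\Theta<2$, equivalently $k>\frac{2n+m-2}{4}$) is precisely the regime in which all exponents are legitimate and Young's inequality with the stated conjugate pair is valid. For $m\ge 2$ one can always select $p_l\ge 2$ with $\sum_l \frac{1}{p_l}=1$, so the feasibility is immediate; the degenerate case $m=1$, where $p_1=1$ would be forced, should be handled separately by first applying H\"older against the constant function and using the perimeter bound $L\le 2\pi$ from Proposition \ref{dz} to reduce to the $L^2$ norm before interpolating. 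Confirming that all the exponent inequalities hold simultaneously is the delicate bit, but it is purely algebraic once the structure above is in place.
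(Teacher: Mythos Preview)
Your proposal is correct and follows essentially the same route as the paper: H\"older, then the Gagliardo--Nirenberg interpolation inequality factorwise, then Young's inequality to split off the $\epsilon\oint_C\varphi_{\xi^k}^2\,d\xi$ term. The paper simply makes the symmetric choice $p_l=m$ for all $l$ in the H\"older step, whereas you allow arbitrary $p_l$ and then observe that only the sum $\Theta=\sum_l\theta_l$ enters the final bound; this is a cosmetic difference, and your explicit discussion of the $m=1$ edge case (which the paper does not treat and does not actually need in its applications) is a small bonus.
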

\begin{proof}
First, employing the H$\rm \ddot{o}$lder inequality produces
	\begin{align}	\label{holder}
		\begin{split}
		\oint_C\left|\varphi_{\xi^{i_1}}\varphi_{\xi^{i_2}}\cdot\cdot\cdot\varphi_{\xi^{i_m}}\right| d\xi&\le\left(\oint_C\left|\varphi_{\xi^{i_1}}\right|^md\xi\right)^{\frac{1}{m}}\cdot\cdot\cdot\left(\oint_C\left|\varphi_{\xi^{i_m}}\right|^md\xi\right)^{\frac{1}{m}}\\
	&=\prod\limits_{j=1}^m\left(\oint_C\left|\varphi_{\xi^{i_j}}\right|^md\xi\right)^{\frac{1}{m}}.
\end{split}
		\end{align}
Then, as before, by interpolation inequality, the terms can be estimated as follows
	\begin{align*}
\left(\oint_C\left|\varphi_{\xi^{i_j}}\right|^md\xi\right)^{\frac{1}{m}}\le c_j E^{\frac{1}{2}\left(1-\frac{i_j}{k}-\frac{1}{2k}+\frac{1}{mk}\right)}\left(\oint_C\varphi_{\xi^k}^2d\xi\right)^{\frac{1}{2}\left(\frac{i_j}{k}+\frac{1}{2k}-\frac{1}{mk}\right)},
	\end{align*}
	where $ c_j $ is a positive constant.
	Thus, substituting back into \eqref{holder}, we obtain
	\begin{align*}
	\oint_C\left|\varphi_{\xi^{i_1}}\cdots\varphi_{\xi^{i_m}}\right| d\xi & \le cE^{\frac{1}{2}\left(m-\frac{n}{k}-\frac{m}{2k}+\frac{1}{k}\right)}\left(\oint_C\varphi_{\xi^k}^2d\xi\right)^{\frac{1}{2}\left(\frac{n}{k}+\frac{m}{2k}-\frac{1}{k}\right)}\\ & \le \epsilon\oint_C\varphi_{\xi^k}^2d\xi +\frac{4k-2n-m+2}{4k}c^{\frac{4k}{4k-2n-m+2}}\\
		&\quad\times\left(\frac{4k\epsilon}{2n+m-2}\right)^{\frac{2-2n-m}{4k-2n-m+2}}E^\frac{2km-2n-m+2}{4k-2n-m+2},
	\end{align*}
	where $ \epsilon $ is an arbitrary positive constant and $ c=c_1c_2\cdots c_m $.
\end{proof}

\begin{proof}[Proof of Proposition \ref{nee}]
	The proof is an immediate consequence of the above two lemmas. Indeed,
	Lemma \ref{zk} implies that
	\begin{align*}
	\frac{d}{dt} \oint_{C}\varphi_{\xi^n}^2d\xi=&-\oint_C\varphi_{\xi^{n+1}}^2d\xi\\
           &\qquad+\sum_{i_1+i_2+i_3=n}^{}\oint_C\varphi_{\xi^{i_1}}*\varphi_{\xi^{i_2}}*\varphi_{\xi^{i_3}}*\varphi_{\xi^n}d\xi+4\oint_C\varphi_{\xi^n}^2d\xi\\ \le&-\oint_C\varphi_{\xi^{n+1}}^2d\xi\\
           &\qquad\quad+K_1\sum_{i_1+i_2+i_3=n}^{}\oint_C\left|\varphi_{\xi^{i_1}}\varphi_{\xi^{i_2}}\varphi_{\xi^{i_3}}\varphi_{\xi^n}\right| d\xi+4\oint_C\varphi_{\xi^n}^2d\xi,
\end{align*}
where $ K_1 $ is some constant.
	Then applying Lemma \ref{gj} leads to
		\begin{align*}
		\oint_C\left|\varphi_{\xi^{i_1}}\varphi_{\xi^{i_2}}\varphi_{\xi^{i_3}}\varphi_{\xi^n}\right| d\xi\le& \epsilon_1\oint\varphi_{\xi^{n+1}}^2d\xi\\&+\frac{1}{2n+2}\left(\frac{2n+2}{2n+1}\epsilon_1\right)^{-(2n+1)}c_1^{2n+2}E^{2n+3},
	\end{align*}
	and
	\begin{equation*}
		\oint_C\varphi_{\xi^n}^2d\xi\le\epsilon_2\oint_C\varphi_{\xi^{n+1}}^2d\xi+\frac{1}{n+1}\left(\frac{n+1}{n}\epsilon_2\right)^{-n}c_2^{n+1}E.
	\end{equation*}
It follows from the former that
	\begin{align*}
	\sum_{i_1+i_2+i_3=n}^{}\oint_C\left|\varphi_{\xi^{i_1}}\varphi_{\xi^{i_2}}\varphi_{\xi^{i_3}}\varphi_{\xi^n}\right| d\xi\le& K_2\epsilon_1\oint\varphi_{\xi^{n+1}}^2d\xi+\frac{K_2}{2n+2}c_1^{2n+2}\\&\qquad\times\left(\frac{2n+2}{2n+1}\epsilon_1\right)^{-(2n+1)}E^{2n+3},
\end{align*}
where $ K_2 $ is some constant. Therefore, by taking appropriate $ \epsilon_1 $ and $ \epsilon_2 $ such that $ K_1K_2\epsilon_1+4\epsilon_2=1 $, we arrive at the desired result.
\end{proof}

\section{Proof of Theorem \ref{thm}}\label{proof-thm}
Before proving the theorem, let  us first, combining the results of the energy estimates, derive some propositions about the energy. We begin with a lemma, which will help with our subsequent argument.
\begin{lem}\label{lem-0toinfty}
	$\displaystyle\int_{0}^{+\infty}E(t)dt$ converges.
\end{lem}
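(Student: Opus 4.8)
The plan is to exploit a monotonicity of the centro-affine perimeter $L(t)=\oint_C d\xi$ together with the isoperimetric bound of Proposition \ref{dz}. First I would differentiate $L$ in time. Parametrizing over the fixed circle $S^1$ so that $d\xi = g\,dp$, and using the metric evolution \eqref{g_t}, which can be rewritten as $g_t=\tfrac12\varphi^2 g$, one finds
\begin{equation*}
\frac{dL}{dt}=\oint_{S^1} g_t\,dp=\frac{1}{2}\oint_C\varphi^2\,d\xi=\frac{1}{2}E(t).
\end{equation*}
Since $E(t)=\oint_C\varphi^2\,d\xi\ge 0$, this identity already shows that $L(t)$ is non-decreasing along the flow.

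The decisive input is then an upper bound on $L(t)$. The solution exists on all of $[0,+\infty)$ by Theorem \ref{thm-globel}, and assuming the evolving curve $C(\cdot,t)$ stays convex, smooth and embedded, Proposition \ref{dz} yields $L(t)\le 2\pi$ for every $t$. Integrating the identity above over $[0,t]$ gives
\begin{equation*}
\frac{1}{2}\int_0^t E(s)\,ds = L(t)-L(0)\le 2\pi - L(0).
\end{equation*}
Because the integrand is nonnegative, the map $t\mapsto\int_0^t E(s)\,ds$ is non-decreasing and bounded above by $2\bigl(2\pi-L(0)\bigr)$, hence it converges as $t\to+\infty$. This is exactly the convergence of $\int_0^{+\infty}E(t)\,dt$, so the lemma follows.

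The main obstacle I anticipate is not the computation, which is short, but justifying that the isoperimetric inequality is legitimately applied at \emph{every} time; that is, that convexity and embeddedness of $C(\cdot,t)$ are preserved under \eqref{pC_t}. If such preservation has already been established (or can be extracted from the uniform curvature bounds of Lemma \ref{lem-bd1}), the argument closes immediately. Otherwise one would first need a preservation-of-convexity statement before invoking Proposition \ref{dz}; this is the only place where more than elementary manipulation is required.
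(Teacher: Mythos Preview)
Your argument is essentially identical to the paper's: both differentiate $L(t)$ using \eqref{g_t} to obtain $\frac{dL}{dt}=\tfrac12 E(t)$, then invoke the isoperimetric bound $L\le 2\pi$ of Proposition \ref{dz} and integrate, concluding by monotonicity of the nonnegative integral. The paper likewise applies Proposition \ref{dz} without further comment on preservation of convexity, so the caveat you flag is not something you are missing relative to the original.
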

\begin{proof}
	First, a straightforward computation shows that
		\begin{equation*}
		\frac{d}{dt}L(t)=\frac{d}{dt}\oint_{C}d\xi=\oint_{C}\frac{g_t}{g}d\xi=\frac{1}{2}\oint_C\varphi^2d\xi,
	\end{equation*}
that is to say,
	\begin{equation*}
		E(t)=2\frac{d}{dt}L(t).
	\end{equation*}
	Then, integrating both sides from $ 0 $ to $ t $ and employing the centro-affine isoperimetric inequality in Proposition \ref{dz}, we may find
	\begin{equation}\label{4pi}
		\int_{0}^{t} E(\tau)d\tau\le 4\pi.
	\end{equation}
	Hence let $ t\rightarrow +\infty $ yields
	\begin{equation*}
		\int_{0}^{+\infty}E(t)dt\le 4\pi.
	\end{equation*}
	Since $ E(t)\ge0 $, it follows that $  \displaystyle\int_{0}^{t} E(\tau)d\tau$ monotonically increases and has an upper bound. Thus $ 		\displaystyle\int_{0}^{+\infty}E(t)dt$ converges.
\end{proof}
Then, from the centro-affine isoperimetric inequality in Proposition \ref{dz} and the characteristics of the  nonlocal flow in Lemma \ref{lem-bd1} and Theorem \ref{thm-globel}, it can be readily verified that
\begin{prop}\label{eyj}
	The energy $ E(t)=\displaystyle\oint_C\varphi^2(\xi,t)d\xi $ is uniformly bounded on the interval  $ [0,+\infty) $.
\end{prop}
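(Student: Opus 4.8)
The plan is to bound $E(t)$ directly, by combining a uniform pointwise control on $\varphi$ with the perimeter bound, rather than attempting to integrate the differential inequality \eqref{0jgj}. The latter contains the superlinear term $E^3$ and, on its own, yields only finite-time estimates that could in principle blow up; so the uniform-in-time boundedness of $E$ must come not from its own evolution equation but from two facts that hold statically at each fixed time. The essential observation is that $E$ is a perimeter-weighted average of $\varphi^2$, and both $\varphi$ and the perimeter are already under control.

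First I would invoke Theorem \ref{thm-globel} to guarantee that the solution, and hence the quantity $E(t)$, is defined for every $t\in[0,+\infty)$. Next, Lemma \ref{lem-bd1} provides a pointwise bound that is uniform in $t$: writing $M:=\max\{2,\varphi_{\max}(0),-\varphi_{\min}(0)\}$ (recall that $\varphi_{\min}(0)\le 0\le\varphi_{\max}(0)$ by the zero-mean property in Lemma \ref{meanzero}), the lemma gives $|\varphi(p,t)|\le M$ for all $p$ and all $t\in[0,+\infty)$, so $\varphi^2\le M^2$ everywhere along the flow. Independently, Proposition \ref{dz} supplies the perimeter bound $L(t)=\oint_C d\xi\le 2\pi$ at every time. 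Combining these, I would estimate
\[
E(t)=\oint_C\varphi^2(\xi,t)\,d\xi\le M^2\oint_C d\xi=M^2L(t)\le 2\pi M^2,
\]
which is the claimed uniform bound, with the right-hand side depending on the initial data only through $M$.

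The only genuinely delicate point is the applicability of Proposition \ref{dz} at every time, rather than the estimate itself. The centro-affine isoperimetric inequality is stated for convex embedded curves, so one must know that the convexity and embeddedness of $C(\cdot,t)$ persist along the flow throughout $[0,+\infty)$; this is the hypothesis under which we operate in Theorem \ref{thm}. Granting the preservation of convexity, the argument reduces to the one-line H\"older-type bound displayed above and presents no further difficulty. I would emphasize that this route is necessary precisely because a Gr\"onwall argument applied to \eqref{0jgj} alone cannot deliver a time-uniform bound, the cubic term there being uncontrolled as $t\to+\infty$.
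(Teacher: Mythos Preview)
Your proof is correct and is precisely the argument the paper has in mind: the paper does not spell out a proof but states that Proposition \ref{eyj} ``can be readily verified'' from Proposition \ref{dz}, Lemma \ref{lem-bd1}, and Theorem \ref{thm-globel}, which is exactly the combination $E(t)\le M^2 L(t)\le 2\pi M^2$ you wrote down. Your remark that the Gr\"onwall approach via \eqref{0jgj} cannot yield the uniform bound is also on point.
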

\begin{cor}\label{cor-vn2-bd} For an arbitrary positive integer $n$, $\displaystyle \oint_C\varphi^2_{\xi^n}d\xi$ is uniformly bounded on $ [0,+\infty) $.
\end{cor}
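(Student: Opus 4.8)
The plan is to upgrade the growth estimate of Proposition~\ref{nee} into a genuine decay estimate. Write $I_n(t):=\oint_C\varphi_{\xi^n}^2\,d\xi$. Proposition~\ref{nee} only bounds $\frac{d}{dt}I_n$ by $c(E+E^{2n+3})$, which together with the uniform bound on $E$ from Proposition~\ref{eyj} gives at best linear growth of $I_n$ in $t$; this is not enough for a uniform bound. The point is that in the identity of Lemma~\ref{zk},
\begin{equation*}
\frac{d}{dt}I_n=-I_{n+1}+\sum_{i_1+i_2+i_3=n}\oint_C\varphi_{\xi^{i_1}}*\varphi_{\xi^{i_2}}*\varphi_{\xi^{i_3}}*\varphi_{\xi^n}\,d\xi+4I_n,
\end{equation*}
one should \emph{not} absorb the entire dissipative term $-I_{n+1}$ into the error, but retain a definite fraction of it.

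First I would re-run the estimates from the proof of Proposition~\ref{nee}, choosing the interpolation parameters more conservatively. Applying Lemma~\ref{gj} with $k=n+1$ to the quartic terms and to $I_n$ itself, and selecting the constants $\epsilon_1,\epsilon_2$ so that their combined coefficient on $I_{n+1}$ does not exceed $\tfrac12$, yields
\begin{equation*}
\frac{d}{dt}I_n\le-\tfrac12 I_{n+1}+C\big(E+E^{2n+3}\big)
\end{equation*}
for a constant $C=C(n)$. Next I would convert the surviving $-\tfrac12 I_{n+1}$ into a decay term for $I_n$. For $n\ge1$ the function $\varphi_{\xi^n}=(\varphi_{\xi^{n-1}})_\xi$ is the $\xi$-derivative of a periodic function, so it has zero mean over the closed curve; hence the Wirtinger (Poincar\'e) inequality applies and gives $I_{n+1}\ge\frac{4\pi^2}{L^2}I_n$. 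By the centro-affine isoperimetric inequality of Proposition~\ref{dz} we have $L\le 2\pi$, so $\frac{4\pi^2}{L^2}\ge1$ and therefore $I_{n+1}\ge I_n$. Substituting this, and using that $E$ is uniformly bounded by Proposition~\ref{eyj}, so that $C(E+E^{2n+3})\le M$ for a constant $M$, we arrive at
\begin{equation*}
\frac{d}{dt}I_n\le-\tfrac12 I_n+M.
\end{equation*}
A standard ODE comparison then gives $I_n(t)\le I_n(0)e^{-t/2}+2M\le\max\{I_n(0),2M\}$, where $I_n(0)$ is finite because $C_0$ is smooth; this is precisely the asserted uniform bound on $[0,+\infty)$.

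The main obstacle, and the crux of the argument, is the passage from the mere growth bound of Proposition~\ref{nee} to a dissipative one: one must keep a positive multiple of $-I_{n+1}$ and trade it for $-I_n$. This trade is exactly where the geometry enters, through the zero-mean property of $\varphi_{\xi^n}$ (valid for $n\ge1$, which is the hypothesis of the corollary) and, crucially, through the bound $L\le 2\pi$, which forces the Wirtinger constant $\frac{4\pi^2}{L^2}$ to be at least $1$. Once this decay inequality is in place the uniform bound is immediate, and the remaining work is purely the bookkeeping of choosing $\epsilon_1,\epsilon_2$ small enough in the interpolation step.
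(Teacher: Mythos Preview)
Your argument is correct, but it takes a substantially longer route than the paper's, and your opening claim is mistaken. You say that Proposition~\ref{nee} together with the pointwise bound on $E$ from Proposition~\ref{eyj} ``gives at best linear growth of $I_n$ in $t$; this is not enough.'' In fact it \emph{is} enough, because the paper does not use only the pointwise bound on $E$: it uses Lemma~\ref{lem-0toinfty} (equivalently \eqref{4pi}), which gives $\int_0^t E\,d\tau\le 4\pi$ for every $t$. Since $E$ is uniformly bounded (Proposition~\ref{eyj}), one has $E+E^{2n+3}\le (1+E^{2n+2})E\le M\,E$ with $M$ a bound for $1+E^{2n+2}$, and integrating Proposition~\ref{nee} directly yields
\[
I_n(t)\le I_n(0)+cM\int_0^t E\,d\tau\le I_n(0)+4\pi cM.
\]
That is the paper's entire proof: three lines, no Wirtinger, no re-running of the interpolation estimates.

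By contrast, you retain a definite fraction of the dissipative term $-I_{n+1}$, apply the Poincar\'e--Wirtinger inequality (using that $\varphi_{\xi^n}$ has zero mean for $n\ge1$ and that $L\le 2\pi$ from Proposition~\ref{dz}) to convert it into $-I_n$, and close with an ODE comparison. This is a valid alternative and in fact yields a stronger conclusion---exponential decay $I_n(t)\le I_n(0)e^{-t/2}+2M$ rather than a mere uniform bound---but at the cost of more work. Note also that both arguments ultimately rely on the isoperimetric inequality $L\le 2\pi$: yours explicitly to control the Wirtinger constant, the paper's implicitly through Lemma~\ref{lem-0toinfty}.
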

\begin{proof}
	Combining \eqref{varphint}, \eqref{4pi},  and Proposition \ref{eyj}, we deduce that
	\begin{align*}
		\oint_C\varphi_{\xi^n}^2(\xi,t)d\xi &\le  cM\int_0^tEdt+\oint_C\varphi_{\xi^n}^2(\xi,0)d\xi\\
		& \le 4\pi cM+\oint_C\varphi_{\xi^n}^2(\xi,0)d\xi,
	\end{align*}
	where $ M $ is the upper bound of $ E^{2n+2}+1 $. This means that  $\displaystyle\oint_C\varphi_{\xi^n}^2(\xi,t)d\xi$ has a uniform  upper bound.
	On the other hand, it is evident  that $\displaystyle \oint_C\varphi^2_{\xi^n}d\xi\geq0$, and hence we complete the proof.
\end{proof}

We derive the  following proposition, which implies that $ \displaystyle\frac{d}{dt}E $ is uniformly bounded on $ [0,+\infty) $.
\begin{prop}\label{dEyj}
For every nonnegative integer $ n $, $\displaystyle \frac{d}{dt}\oint_C\varphi_{\xi^n}^2d\xi $ is uniformly bounded on $ [0,+\infty) $.
\end{prop}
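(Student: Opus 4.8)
The plan is to start from the exact evolution identity for $\oint_C \varphi_{\xi^n}^2\,d\xi$ already recorded in Lemma~\ref{zk}, namely
\begin{equation*}
\frac{d}{dt}\oint_C \varphi_{\xi^n}^2\,d\xi = -\oint_C \varphi_{\xi^{n+1}}^2\,d\xi + \sum_{i_1+i_2+i_3=n}\oint_C \varphi_{\xi^{i_1}}*\varphi_{\xi^{i_2}}*\varphi_{\xi^{i_3}}*\varphi_{\xi^n}\,d\xi + 4\oint_C \varphi_{\xi^n}^2\,d\xi,
\end{equation*}
and to bound each of the three groups of terms on the right uniformly in $t$. The first and the last terms are immediate: by Corollary~\ref{cor-vn2-bd} (together with Proposition~\ref{eyj} for index $0$), both $\oint_C \varphi_{\xi^{n+1}}^2\,d\xi$ and $\oint_C \varphi_{\xi^n}^2\,d\xi$ are uniformly bounded on $[0,+\infty)$, so $-\oint_C \varphi_{\xi^{n+1}}^2\,d\xi$ and $4\oint_C \varphi_{\xi^n}^2\,d\xi$ are already controlled in absolute value.

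The only genuine work is the cubic sum. Each summand is, up to a fixed constant, $\oint_C|\varphi_{\xi^{i_1}}\varphi_{\xi^{i_2}}\varphi_{\xi^{i_3}}\varphi_{\xi^n}|\,d\xi$ with $i_1+i_2+i_3=n$, a product of four derivative factors whose orders sum to $2n$. I would estimate it exactly as in the proof of Proposition~\ref{nee}: applying Lemma~\ref{gj} with $m=4$ and $k=n+1$ gives
\begin{equation*}
\oint_C|\varphi_{\xi^{i_1}}\varphi_{\xi^{i_2}}\varphi_{\xi^{i_3}}\varphi_{\xi^n}|\,d\xi \le \epsilon\oint_C \varphi_{\xi^{n+1}}^2\,d\xi + c\, E^{2n+3},
\end{equation*}
for an arbitrary $\epsilon>0$ and a constant $c$ depending only on $n$ and $\epsilon$. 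Summing over the finitely many admissible index triples and then invoking Corollary~\ref{cor-vn2-bd} for $\oint_C \varphi_{\xi^{n+1}}^2\,d\xi$ and Proposition~\ref{eyj} for $E$, the right-hand side is uniformly bounded on $[0,+\infty)$, and hence so is the cubic sum in absolute value.

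Combining the three estimates shows that $\frac{d}{dt}\oint_C \varphi_{\xi^n}^2\,d\xi$ is bounded in absolute value uniformly in $t$, which is the claim. The point on which everything hinges, and the reason the statement is not circular, is that Corollary~\ref{cor-vn2-bd} already furnishes uniform $L^2$ bounds for \emph{every} derivative order, in particular for $\varphi_{\xi^{n+1}}$; this is exactly what lets us absorb the $\epsilon\oint_C \varphi_{\xi^{n+1}}^2\,d\xi$ term produced by Lemma~\ref{gj} into a quantity that is itself known to be bounded. The main obstacle is thus purely a matter of bookkeeping in the nonlinear term: one must verify that the highest derivative appearing after applying Lemma~\ref{gj} is $\varphi_{\xi^{n+1}}$ and never of higher order, so that no derivative exceeding those controlled by Corollary~\ref{cor-vn2-bd} is ever required. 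Once this is checked, the uniform bound follows without any further analysis.
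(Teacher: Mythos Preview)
Your proposal is correct and follows essentially the same route as the paper: both start from the identity of Lemma~\ref{zk}, control the cubic sum via Lemma~\ref{gj} with $k=n+1$ (yielding the $\epsilon\oint_C\varphi_{\xi^{n+1}}^2d\xi + cE^{2n+3}$ bound), and then invoke Corollary~\ref{cor-vn2-bd} and Proposition~\ref{eyj} to make every term uniformly bounded. The only cosmetic difference is that the paper cites \eqref{varphint} directly for the upper bound and writes out the lower bound separately, whereas you bound the absolute value of each of the three terms at once; the underlying estimates are identical.
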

\begin{proof}
First,  from \eqref{varphint} and Proposition \ref{eyj}, it is evident  that $\displaystyle \frac{d}{dt}\oint_C\varphi_{\xi^n}^2d\xi$ has a uniform upper bound on $ [0,+\infty) $.
On the other hand, utilizing the related results in  the proof of Proposition \ref{nee},  we deduce that
	\begin{align*}
		\frac{d}{dt}\oint_C\varphi_{\xi^n}^2d\xi&=-\oint_C\varphi_{\xi^{n+1}}^2d\xi\\
        &\qquad\quad+\sum_{i_1+i_2+i_3=n}^{}\oint_C\varphi_{\xi^{i_1}}*\varphi_{\xi^{i_2}}*\varphi_{\xi^{i_3}}*\varphi_{\xi^n}d\xi+4\oint_C\varphi_{\xi^n}^2d\xi\\
		&\geq -\oint_C\varphi_{\xi^{n+1}}^2d\xi-K_1\sum_{i_1+i_2+i_3=n}^{}\oint_C\left|\varphi_{\xi^{i_1}}\varphi_{\xi^{i_2}}\varphi_{\xi^{i_3}}\varphi_{\xi^n}\right| d\xi\\
		&\geq-(1+K_1K_2\epsilon_1)\oint_C\varphi_{\xi^{n+1}}^2d\xi\\&\quad-\frac{K_1K_2}{2n+2}c_1^{2n+2}\left(\frac{2n+2}{2n+1}\epsilon_1\right)^{-(2n+1)}E^{2n+3}.
	\end{align*}
	Therefore, employing Proposition \ref{eyj} and Corollary \ref{cor-vn2-bd}, it follows that  $\displaystyle \frac{d}{dt}\oint_C\varphi_{\xi^n}^2d\xi$ has a uniform lower bound. This completes the proof.
\end{proof}

The following result shows that  the energy becomes vanishingly small as time goes to infinity.

\begin{prop}\label{prop-Eto0}
	Let $ C(\cdot,t) $ be a solution to the flow \eqref{pC_t}. Then its energy  $ E(t) $ converges to zero as time tends to infinity, that is,
	\begin{equation*}
		\displaystyle\lim\limits_{t\rightarrow+\infty}E(t)=0.
	\end{equation*}
\end{prop}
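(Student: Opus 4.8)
The plan is to combine the two facts already established earlier in this section: the integrability of the energy over $[0,+\infty)$ from Lemma \ref{lem-0toinfty}, and the uniform boundedness of $\frac{d}{dt}E$, which is the $n=0$ case of Proposition \ref{dEyj}. Together these two ingredients force $E(t)\to0$ by a Barbalat-type argument, and there is no further geometric input needed.

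First I would record the two inputs in the precise form required. Since $E(t)=\oint_C\varphi^2\,d\xi\ge0$, and Lemma \ref{lem-0toinfty} gives that the improper integral $\int_0^{+\infty}E(t)\,dt$ is finite (in fact bounded by $4\pi$), the tail $\int_t^{+\infty}E(\tau)\,d\tau$ tends to $0$ as $t\to+\infty$. Next, Proposition \ref{dEyj} with $n=0$ supplies a constant $M>0$ with $\bigl|\frac{d}{dt}E(t)\bigr|\le M$ on $[0,+\infty)$, so $E$ is Lipschitz and hence uniformly continuous.

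The heart of the proof is a contradiction argument. Suppose $E(t)$ does not tend to $0$; then there exist $\varepsilon>0$ and a sequence $t_k\to+\infty$ with $E(t_k)\ge\varepsilon$. Using $|E'|\le M$, on each interval $I_k:=\bigl[t_k-\tfrac{\varepsilon}{2M},\,t_k+\tfrac{\varepsilon}{2M}\bigr]$ one has $E\ge\tfrac{\varepsilon}{2}$. Passing to a subsequence so that the $t_k$ are spaced more than $\tfrac{\varepsilon}{M}$ apart makes the intervals $I_k$ pairwise disjoint and eventually contained in $[0,+\infty)$, whence
\[
\int_0^{+\infty}E(t)\,dt\;\ge\;\sum_k\int_{I_k}E(t)\,dt\;\ge\;\sum_k\frac{\varepsilon}{2}\cdot\frac{\varepsilon}{M}\;=\;+\infty,
\]
contradicting the finiteness of the integral. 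Therefore $\lim_{t\to+\infty}E(t)=0$.

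The step requiring the most care is the bookkeeping that guarantees the intervals $I_k$ are disjoint and lie in the domain $[0,+\infty)$; this is purely a matter of thinning the sequence $t_k$. Beyond that, the argument is soft: it is a direct consequence of nonnegativity, the convergence of $\int_0^{+\infty}E$, and the uniform control on $E'$, so I do not anticipate any genuine analytic obstacle.
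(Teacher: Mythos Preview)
Your proof is correct and follows essentially the same approach as the paper: both combine the integrability of $E$ from Lemma~\ref{lem-0toinfty} with the uniform bound on $\frac{d}{dt}E$ from Proposition~\ref{dEyj} (case $n=0$) to conclude via a Barbalat-type argument. The only cosmetic difference is that the paper carries out the final step as a direct $\epsilon$--$\delta$ estimate rather than by contradiction.
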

\begin{proof}
	First Proposition \ref{dEyj} allows us to set $ \displaystyle\left|\frac{d}{dt}E(t)\right|\le M  $ for $ t\in [0,+\infty) $. Then it is easy to see  that $ E(t) $ is uniformly continuous on $ [0,+\infty) $. Given  $ \epsilon >0 $, and we can choose  $ \displaystyle\delta=\frac{\epsilon}{M} $
  	so that
	\begin{equation*}
		\left| E(t_1)-E(t_2)\right|\le M\left| t_1-t_2\right|<\epsilon \qquad \text{if}\qquad  \left| t_1 -t_2\right| <\delta.
	\end{equation*}
	Since  $  \displaystyle\int_{0}^{+\infty}E(\tau)d\tau $ converges according to Lemma \ref{lem-0toinfty},  there exists $ T \ge 0$ such that
	\begin{equation*}
		\int_{T_1}^{T_2}E(\tau)d\tau <\frac{\delta^2}{2} \qquad \text{for all}\qquad T_2>T_1 \ge T .
	\end{equation*}
  Hence for every $ t>T $, there exists $ T_3 \ge T$ such that $  t\in\left[T_3,T_3+\frac{\delta}{2}\right] $,  it follows that
	\begin{align*}
		E(t)& =\left| E(t)-\frac{2}{\delta}\int_{T_3}^{T_3+\frac{\delta}{2}}E(\tau)d\tau+\frac{2}{\delta}\int_{T_3}^{T_3+\frac{\delta}{2}}E(\tau)d\tau\right|
		\\ &\le \frac{2}{\delta}\int_{T_3}^{T_3+\frac{\delta}{2}}\left| E(t)-E(\tau)\right| d\tau+\frac{2}{\delta}\left| \int_{T_3}^{T_3+\frac{\delta}{2}}E(\tau)d\tau\right| \\
		& <\epsilon+\delta\\ & =\left(1+\frac{1}{M}\right)\epsilon,
	\end{align*}
	which implies that $ E(t) $ converges to zero as $ t\rightarrow+\infty $.
\end{proof}

We may now complete the proof of Theorem \ref{thm}.

\begin{proof}[Proof of Theorem \ref{thm}]
	The result of Corollary \ref{cor-vn2-bd}, together with Sobolev inequality, show that all derivatives of $\displaystyle \varphi(\xi,t) $ are uniformly bounded. Utilizing \eqref{huanwei} and the evolution equation \eqref{phi_t}, we can bound the time derivatives also. Thus, according to the  Arzela-Ascoli theorem, for any sequence of times $\displaystyle \{t_k\} $, there exists a subsequence $ \displaystyle \{t_{k_i}\} $  such that
	$ \displaystyle\varphi(\xi,t_{k_i}) $ converges smoothly to a function $ f(\xi) $ as $ \displaystyle t_{k_i}\rightarrow +\infty $. In view of Proposition \ref{prop-Eto0}, $ f(\xi) $ must be zero, i.e., $ f(\xi)\equiv0 $. Since every subsequence converges to zero, $ \varphi $ converges smoothly to zero as $ t\rightarrow+\infty $. Hence we complete the proof.
\end{proof}

\section{The backward in time limit}
In this section, we will discuss the backward limit of the centro-affine curvature of  the flow \eqref{pC_t}.
Assume from now on that $ C(\cdot,t) $ is a solution to the flow \eqref{pC_t} defined on $ (-\infty,+\infty) $ and its centro-affine curvature $ \varphi(\xi,t) $ is uniformly bounded on $ S^1\times(-\infty,+\infty) $.
As a simple example, consider
\begin{exmp}\label{exm-el}
	The ellipse is a static solution of the flow \eq{pC_t}. In the Euclidean setting (refer to \cite{qy} for details), the flow \eqref{pC_t} is equivalent to
	\begin{equation*}
 C_t=-\frac{1}{2}\log\left(\frac{\kappa}{h^3}\right)C, \qquad C(\cdot,0)=C_0(\cdot).
	\end{equation*}
 If the initial curve is an ellipse expressed by $(a_0\cos\theta,~b_0\sin\theta)^{\mathrm{T}}$, then the solution is
	\begin{equation*}
		C(\theta,t)=(a_0b_0)^{(\exp(2t)-1)/2}(a_0\cos\theta,~b_0\sin\theta)^{\mathrm{T}}.
	\end{equation*}
 Hence the ellipse satisfies that $a_0b_0=1$ is the static solution.
	Note that $C(\theta, t)$ converges to the ellipse with area equal to $\pi$ as $t\to-\infty$.
\end{exmp}

We will first obtain derivative estimates for the curvature by standard bootstrapping argument.
\begin{prop}\label{ade}
	For every integer $ n \ge 0 $ there exists a constant $ C_n $ such that $ \left|\varphi_{\xi^n}(\xi,t)\right|\le C_n$ on $ S^1\times(-\infty,+\infty). $
\end{prop}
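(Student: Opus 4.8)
The plan is to reduce everything, via the energy machinery of Section~\ref{EE}, to a single scalar differential inequality for each energy $F_n(t):=\oint_C\varphi_{\xi^n}^2\,d\xi$, and then to exploit the two-sided existence of the flow to turn that inequality into a uniform bound; the pointwise estimates then follow by a Sobolev inequality on the circle. The case $n=0$ is the standing hypothesis: it supplies a constant $C_0$ with $|\varphi|\le C_0$. I would first note that this already bounds $E=F_0$ uniformly, because the curves remain convex and hence $L(t)=\oint_C d\xi\le 2\pi$ by Proposition~\ref{dz}, so that $E(t)=\oint_C\varphi^2\,d\xi\le C_0^2 L(t)\le 2\pi C_0^2$ on all of $(-\infty,+\infty)$.

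For $n\ge1$ I would derive a closed inequality for $F_n$ by repeating the computation in the proof of Proposition~\ref{nee}. Lemma~\ref{zk} gives
\begin{equation*}
\frac{d}{dt}F_n=-F_{n+1}+\sum_{i_1+i_2+i_3=n}\oint_C\varphi_{\xi^{i_1}}*\varphi_{\xi^{i_2}}*\varphi_{\xi^{i_3}}*\varphi_{\xi^n}\,d\xi+4F_n ,
\end{equation*}
and Lemma~\ref{gj} with $k=n+1$ absorbs the cubic terms into $\epsilon_1 F_{n+1}+CE^{2n+3}$, while the interpolation inequality (applicable since each $\varphi_{\xi^n}$ has zero mean) gives $4F_n\le\tfrac14 F_{n+1}+CE$. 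As $E$ is already bounded, a small choice of $\epsilon_1$ yields $\tfrac{d}{dt}F_n\le-\tfrac14 F_{n+1}+b$ for a uniform constant $b$. Using the interpolation inequality in the opposite direction, $F_{n+1}\ge a\,F_n^{1+1/n}$ with $a>0$ depending only on the bound for $E$, I obtain the autonomous inequality
\begin{equation*}
\frac{d}{dt}F_n\le -a\,F_n^{1+1/n}+b .
\end{equation*}

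The crux is to upgrade this to a uniform bound, and this is exactly where the ancient (two-sided) nature of the solution is indispensable. I would prove the elementary comparison fact that any nonnegative $f$ which is finite on $(-\infty,T]$ and satisfies $f'\le -af^{1+\delta}+b$ with $a,b,\delta>0$ must satisfy $f\le 2(b/a)^{1/(1+\delta)}$ there. Setting $M=(b/a)^{1/(1+\delta)}$, one checks that $f'\le-\tfrac{a}{2}f^{1+\delta}$ wherever $f>2M$; so if $f(t_1)>2M$, then running time backwards the function $h(s)=f(t_1-s)$ obeys $h'\ge\tfrac{a}{2}h^{1+\delta}$ as long as $h>2M$, and comparison with the separable ODE forces $h$, hence $f$, to blow up at the finite past time $t_1-2/(a\delta f(t_1)^\delta)$, contradicting finiteness on $(-\infty,t_1]$. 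Applying this to $f=F_n$ (finite for every $t$ since $C(\cdot,t)$ is smooth and closed) with $\delta=1/n$ bounds $F_n$ uniformly on $(-\infty,+\infty)$ for every $n\ge1$. I expect this blow-up dichotomy to be the one genuinely nontrivial step; the derivation of the differential inequality is routine given Section~\ref{EE}.

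Finally I would convert the uniform $L^2$ bounds into the asserted pointwise bounds. Since each $\varphi_{\xi^n}$ has zero mean (Lemma~\ref{meanzero} for $n=0$, and it is an exact $\xi$-derivative for $n\ge1$), we have $\min\varphi_{\xi^n}\le0\le\max\varphi_{\xi^n}$, so
\begin{equation*}
\|\varphi_{\xi^n}\|_\infty\le\oint_C|\varphi_{\xi^{n+1}}|\,d\xi\le L^{1/2}F_{n+1}^{1/2}\le\sqrt{2\pi}\,F_{n+1}^{1/2} ,
\end{equation*}
where only the upper bound $L\le2\pi$ is used, so no lower bound on the (backward-decreasing) perimeter is needed. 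As every $F_{n+1}$ has just been bounded uniformly, this yields constants $C_n$ with $|\varphi_{\xi^n}(\xi,t)|\le C_n$ on $S^1\times(-\infty,+\infty)$, completing the proof.
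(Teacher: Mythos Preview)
Your argument is correct, and it takes a genuinely different route from the paper's proof. The paper argues by induction on $n$ via a pointwise maximum-principle bootstrap: on each unit time interval $[\tau_0,\tau_0+1]$ it shows that the auxiliary quantity $\tilde\tau\,\varphi_{\xi^n}^2+M\varphi_{\xi^{n-1}}^2$ is a subsolution of a linear heat equation with bounded forcing, so the maximum principle bounds $\varphi_{\xi^n}^2(\cdot,\tau_0+1)$ in terms of the inductively known $C_0,\dots,C_{n-1}$; the arbitrariness of $\tau_0$ then gives the global bound. Your approach is instead purely integral: you reuse the energy calculus of Section~\ref{EE} (Lemmas~\ref{zk} and~\ref{gj}) to obtain the autonomous scalar inequality $F_n'\le -aF_n^{1+1/n}+b$ with $a,b$ depending only on $\sup E$, and then the ancient existence is invoked through a backward blow-up dichotomy rather than through a one-unit waiting time. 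A pleasant feature of your route is that it is \emph{not} inductive---each $F_n$ is bounded directly from the uniform bound on $E$---and it recycles the machinery already built in Section~\ref{EE}; the paper's route, on the other hand, is entirely pointwise and sidesteps any discussion of whether the interpolation constants are independent of the (backward-decreasing) perimeter $L(t)$. Since the specific Gagliardo--Nirenberg inequalities you use are exactly the scale-invariant ones (as reflected in the exponents in Lemma~\ref{gj} and in $F_n\le E^{1/(n+1)}F_{n+1}^{n/(n+1)}$), and since your Sobolev step uses only the upper bound $L\le 2\pi$, this potential issue does not arise.
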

	\begin{proof}
		The proof proceeds by induction on $ n $.
		According to our assumptions, the result  evidently holds for $ n=0 $. Assume that it holds up to $ n-1 $, namely $  \left|\varphi_{\xi^m}\right|\le C_m$ for all $ 0\le m\le n-1 $, then we prove that it also holds for $ n $.
		First of all, we rescale the time variable $ \displaystyle\tau=\frac{1}{2}t $  and  employ the notational convention in Section \ref{EE},  then \eqref{g_t}, \eqref{phi_t}, and \eqref{xi_t} can be rewritten in the form
		\begin{align*}\label{phi_tau}
			\nonumber\frac{g_\tau}{g}&=\varphi^2,\\
			\displaystyle\varphi_\tau&=\varphi_{\xi\xi}+\varphi*\varphi*\varphi+4\varphi,
		\end{align*}
		and
		\begin{equation*}\label{phi_xitau}
			\varphi_{\xi\tau}=\varphi_{\xi^3}+\varphi*\varphi*\varphi_\xi+4\varphi_\xi.
		\end{equation*}
		More generally, Lemma \ref{xint} yields
		\begin{equation*}
			\varphi_{\xi^n\tau}=\varphi_{\xi^{n+2}}+\sum_{i_1+i_2+i_3=n}^{}\varphi_{\xi^{i_1}}*\varphi_{\xi^{i_2}}*\varphi_{\xi^{i_3}}+4\varphi_{\xi^n}.
		\end{equation*}
		Assume $ \tau_0\in\mathbb{R} $ is given,  and we  put $\displaystyle \tilde{\tau}=\tau-\tau_0 $ for $\displaystyle \tau\in[\tau_0,\tau_0+1]$ so that $ \tilde{\tau}\in [0,1]. $  Now let us  consider the key  quantity
		$\displaystyle\tilde{\tau}\varphi_{\xi^n}^2+M\varphi_{\xi^{n-1}}^2 $, where $ M $ is a positive constant to be determined later, and compute its   evolution equation.
		A direct computation  demonstrates that
		\begin{align*}
			\left(\varphi_{\xi^{n-1}}^2\right)_\tau=&\left(\varphi_{\xi^{n-1}}^2\right)_{\xi\xi}-2\varphi_{\xi^n
			}^2\\ &+\sum_{j_1+j_2+j_3=n-1}^{}\varphi_{\xi^{j_1}}*\varphi_{\xi^{j_2}}*\varphi_{\xi^{j_3}}*\varphi_{\xi^{n-1}}+8\varphi_{\xi^{n-1}}^2
		\end{align*}
		and
		\begin{align*}
			\left(\varphi_{\xi^{n}}^2\right)_\tau=&\left(\varphi_{\xi^{n}}^2\right)_{\xi\xi}-2\varphi_{\xi^{n+1}
			}^2\\ &+\sum_{i_1+i_2+i_3=n}^{}\varphi_{\xi^{i_1}}*\varphi_{\xi^{i_2}}*\varphi_{\xi^{i_3}}*\varphi_{\xi^{n}}+8\varphi_{\xi^{n}}^2.
		\end{align*}
		Then the evolution equation for the quantity follows
		\begin{align*}
			\left(\tilde{\tau}\varphi_{\xi^n}^2+M\varphi_{\xi^{n-1}}^2\right)_\tau=\;&\varphi_{\xi^n}^2+\tilde{\tau}\bigg(\left(\varphi_{\xi^n}^2\right)_{\xi\xi}-2\varphi_{\xi^{n+1}}^2\\ &+\sum_{i_1+i_2+i_3=n}^{}\varphi_{\xi^{i_1}}*\varphi_{\xi^{i_2}}*\varphi_{\xi^{i_3}}*\varphi_{\xi^{n}}+8\varphi_{\xi^{n}}^2\bigg)\\
			&+M\bigg(\left(\varphi_{\xi^{n-1}}^2\right)_{\xi\xi}-2\varphi_{\xi^n
			}^2 \\ &+\sum_{j_1+j_2+j_3=n-1}^{}\varphi_{\xi^{j_1}}*\varphi_{\xi^{j_2}}*\varphi_{\xi^{j_3}}*\varphi_{\xi^{n-1}}+8\varphi_{\xi^{n-1}}^2\bigg).
		\end{align*}
		Let us next  insert a discussion about the sum  $\displaystyle \sum\limits_{i_1+i_2+i_3=n}\varphi_{\xi^{i_1}}*\varphi_{\xi^{i_2}}*\varphi_{\xi^{i_3}}*\varphi_{\xi^{n}} $. For the item $ \varphi*\varphi*\varphi_{\xi^n}*\varphi_{\xi^n} $ among them, we have the estimate
		\begin{equation*}
			\varphi*\varphi*\varphi_{\xi^n}*\varphi_{\xi^n} \le K_1C_0^2\varphi_{\xi^n}^2,
		\end{equation*}
		where $ K_1$ is some positive constant.
		For the rest, applying Young's inequality and induction hypothesis, we deduce that
		\begin{equation*}
			\sum_{\substack{i_3\neq n\\ i_1+i_2+i_3=n}}\varphi_{\xi^{i_1}}*\varphi_{\xi^{i_2}}*\varphi_{\xi^{i_3}}*\varphi_{\xi^{n}}\le K_2\left(\frac{\varphi_{\xi^n}^2}{2}+\frac{1}{2}\right)\sum_{\substack{i_3\neq n\\ i_1+i_2+i_3=n}}C_{i_1}C_{i_2}C_{i_3},
		\end{equation*}
		where $ K_2 $ is some positive constant.
		Combining the above discussion and noting  that $ \tilde{\tau}\le1 $, we arrive at
		\begin{align*}
			\left(\tilde{\tau}\varphi_{\xi^n}^2+M\varphi_{\xi^{n-1}}^2\right)_\tau\le&\left(\tilde{\tau}\varphi_{\xi^n}^2+M\varphi_{\xi^{n-1}}^2\right)_{\xi\xi}\\& +\left(K_1C_0^2+9+\frac{K_2}{2}\sum_{\substack{i_3\neq n\\ i_1+i_2+i_3=n}}C_{i_1}C_{i_2}C_{i_3}-2M\right)\varphi_{\xi^n}^2\\&+\frac{K_2}{2}\sum_{\substack{i_3\neq n\\ i_1+i_2+i_3=n}}^{}C_{i_1}C_{i_2}C_{i_3}+8MC_{n-1}^2\\&+K_3M\sum_{j_1+j_2+j_3=n-1}^{}C_{j_1}C_{j_2}C_{j_3}C_{n-1},
		\end{align*}
		where $ K_3 $ is some positive constant.
		For brevity here, we will use the notation $ K $ to replace the last three terms, that is to say,
		\begin{equation*}\label{K}
			\begin{aligned}
				K=&\frac{K_2}{2}\sum_{\substack{i_3\neq n\\ i_1+i_2+i_3=n}}^{}C_{i_1}C_{i_2}C_{i_3}+8MC_{n-1}^2\\&+K_3M\sum_{j_1+j_2+j_3=n-1}^{}C_{j_1}C_{j_2}C_{j_3}C_{n-1}.
			\end{aligned}
		\end{equation*}
		Therefore, choosing an appropriate constant $ M $ such that
		\begin{equation*}
			K_1C_0^2+9+\frac{K_2}{2}\sum_{\substack{i_3\neq n\\ i_1+i_2+i_3=n}}C_{i_1}C_{i_2}C_{i_3}-2M\le0
		\end{equation*}
		leads to
		\begin{align*}
			\left(\tilde{\tau}\varphi_{\xi^n}^2+M\varphi_{\xi^{n-1}}^2\right)_\tau\le&\left(\tilde{\tau}\varphi_{\xi^n}^2+M\varphi_{\xi^{n-1}}^2\right)_{\xi\xi}+K,
		\end{align*}
		which implies the quantity  $ 	\tilde{\tau}\varphi_{\xi^n}^2+M\varphi_{\xi^{n-1}}^2 $ is a subsolution of the semilinear heat equation
		\begin{align}\label{semil-hq}
			\frac{\partial u}{\partial \tau}=&\frac{\partial^2u}{\partial\xi^2}+K.
		\end{align}
		It is worth pointing out that $ 	\tilde{\tau}\varphi_{\xi^n}^2+M\varphi_{\xi^{n-1}}^2\le MC_{n-1}^2 $ at $ \tau=\tau_0 $, so we can adapt the scalar  maximum principle  (see \cite{bcdk}) to \eqref{semil-hq}.
		Let $ v $ be the solution to the  associate initial value problem
		\begin{equation*}
			\frac{dv}{d\tau}=K \qquad \text{satisfying} \qquad v(\tau_0)=MC_{n-1}^2.
		\end{equation*}
		Then the maximum principle guarantees that, for  all $ (\xi,\tau)\in S^1\times[\tau_0,\tau_0+1]  $,  $	\tilde{\tau}\varphi_{\xi^n}^2+M\varphi_{\xi^{n-1}}^2 \le v(\tau)  $, that is
		\begin{equation*}
			(\tau-\tau_0)\varphi_{\xi^n}^2+M\varphi_{\xi^{n-1}}^2\le K(\tau-\tau_0)+MC_{n-1}^2.
		\end{equation*}
		Hence at $ \tau=\tau_0+1 $, there holds
		\begin{equation*}
			\varphi_{\xi^n}^2(\xi,\tau_0+1)+M\varphi_{\xi^{n-1}}^2\le K+MC_{n-1}^2.
		\end{equation*}
		It follows that
		\begin{equation}\label{gj_k}
			\varphi_{\xi^n}^2(\xi,\tau_0+1)\le K+MC_{n-1}^2.
		\end{equation}
		Recalling $ \tau_0\in\mathbb{R} $ was arbitrary, we then obtain $\displaystyle \left|\varphi_{\xi^n}(\xi,\tau)\right|\le\sqrt{K+MC_{n+1}^2}$ on $ S^1\times(-\infty,+\infty) $.
		Note that $ K $, $ M $, and $ C_{n+1} $ are all constants. Thus we complete the proof.
\end{proof}
\begin{rem}\label{as-all}
	From Lemma  \ref{xint} and the above proposition, we know that all spatial and time derivatives of $ \varphi $ are uniformly bounded on $ S^1\times(-\infty,+\infty) $.
\end{rem}
\begin{cor}\label{as-dbound}
	For every integer $ n\ge0 $, $\displaystyle\frac{d}{dt}\oint_{C}\varphi^2_{\xi^n}(\xi,t)d\xi$ is uniformly bounded on the interval $(-\infty,+\infty)  $.
\end{cor}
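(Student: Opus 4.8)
The plan is to read the bound off directly from the evolution formula for $\oint_C\varphi_{\xi^n}^2\,d\xi$ already established in Lemma \ref{zk}, now combined with the global derivative estimates of Proposition \ref{ade} and the centro-affine isoperimetric inequality. First I would invoke Lemma \ref{zk}, which gives
\[
\frac{d}{dt}\oint_{C}\varphi_{\xi^n}^2d\xi=-\oint_C\varphi_{\xi^{n+1}}^2d\xi+\sum_{i_1+i_2+i_3=n}\oint_C\varphi_{\xi^{i_1}}*\varphi_{\xi^{i_2}}*\varphi_{\xi^{i_3}}*\varphi_{\xi^n}d\xi+4\oint_C\varphi_{\xi^n}^2d\xi.
\]
This identity was obtained purely from the evolution equation \eqref{phi_t} and integration by parts, so it remains valid throughout the interval $(-\infty,+\infty)$ on which the flow is now assumed to exist.

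Next I would bound each term on the right-hand side. By Remark \ref{as-all}, every spatial derivative $\varphi_{\xi^m}$ is uniformly bounded on $S^1\times(-\infty,+\infty)$ by a constant $C_m$; in particular the integrands $\varphi_{\xi^{n+1}}^2$, $\varphi_{\xi^{i_1}}\varphi_{\xi^{i_2}}\varphi_{\xi^{i_3}}\varphi_{\xi^n}$, and $\varphi_{\xi^n}^2$ are products of uniformly bounded functions, hence pointwise bounded by a constant depending only on $n$. Each integral is therefore controlled by that constant times the centro-affine perimeter $\oint_C d\xi=L(t)$. Since every member $C(\cdot,t)$ of the family is a convex embedded closed curve, Proposition \ref{dz} yields $L(t)\le2\pi$ at every time. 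As the sum over $i_1+i_2+i_3=n$ is finite, it follows that $\frac{d}{dt}\oint_C\varphi_{\xi^n}^2d\xi$ is bounded, above and below, by fixed multiples of $2\pi$, uniformly in $t$.

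I do not expect any genuine obstacle, since all of the analytic difficulty has already been absorbed into the uniform derivative bounds of Proposition \ref{ade}. The only point that deserves a moment's attention is the uniform boundedness of the domain measure $L(t)$ as $t\to-\infty$: unlike the forward-time argument of Lemma \ref{lem-0toinfty}, here one cannot rely on monotonicity of $L$, but the isoperimetric inequality of Proposition \ref{dz} applies at each individual time because convexity of $C(\cdot,t)$ is part of the standing hypothesis of Theorem \ref{thm2}, giving $L(t)\le2\pi$ for all $t\in(-\infty,+\infty)$ at once.
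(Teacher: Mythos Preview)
Your argument is correct and takes essentially the same route as the paper: express $\frac{d}{dt}\oint_C\varphi_{\xi^n}^2d\xi$ as an integral of uniformly bounded integrands (via Proposition \ref{ade}/Remark \ref{as-all}) over a domain of length $L(t)\le 2\pi$ (via Proposition \ref{dz}). The only cosmetic difference is that the paper uses the raw form $2\oint_C\varphi_{\xi^n}\varphi_{\xi^n t}\,d\xi+\tfrac12\oint_C\varphi^2\varphi_{\xi^n}^2\,d\xi$ before integration by parts, whereas you invoke the post--integration-by-parts identity of Lemma \ref{zk}; either way the conclusion follows immediately.
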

\begin{proof}
	A direct computation shows that
	\begin{align*}
		\left|\frac{d}{dt}\oint_C\varphi_{\xi^n}^2d\xi\right|&=\left|2\oint_C\varphi_{\xi^n}\varphi_{\xi^nt}d\xi+\frac{1}{2}\oint_C\varphi^2\varphi_{\xi^n}^2d\xi\right|\\&\le2\oint_C\left|\varphi_{\xi^n}\varphi_{\xi^nt}\right| d\xi+\frac{1}{2}\oint_C\left|\varphi^2\varphi_{\xi^n}^2\right| d\xi.
	\end{align*}
	In view of Proposition \ref{dz} and Remark \ref{as-all}, the result readily follows.
\end{proof}
We next deduce the following auxiliary lemma analog to Lemma \ref{lem-0toinfty}. For convenience, the notational convention in Section \ref{EE} is employed here.
\begin{lem}\label{-+c}
	$\displaystyle\int_{-\infty}^{+\infty}E(t)dt$ converges.
\end{lem}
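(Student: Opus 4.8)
The plan is to repeat the monotonicity argument of Lemma \ref{lem-0toinfty}, but now to track the centro-affine perimeter across the whole time axis instead of a half-line. First I would recall the identity established there: by \eqref{g_t},
\[
  \frac{d}{dt}L(t) = \oint_C \frac{g_t}{g}\,d\xi = \frac{1}{2}\oint_C \varphi^2\,d\xi = \frac{1}{2}E(t),
\]
so that $E(t) = 2\,\frac{d}{dt}L(t)$. Since $E(t)\ge 0$, this shows that $L(t)$ is nondecreasing on all of $(-\infty,+\infty)$.

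Next I would confine $L$ to a fixed bounded interval. Under the standing hypotheses of this section the curve $C(\cdot,t)$ is a convex smooth embedded closed curve for every $t$, so the centro-affine isoperimetric inequality of Proposition \ref{dz} applies at each time and gives $L(t)\le 2\pi$; together with the trivial lower bound $L(t)\ge 0$, the monotone function $L$ takes values in $[0,2\pi]$. A bounded monotone function has finite one-sided limits, so $L_{\pm\infty}:=\lim_{t\to\pm\infty}L(t)$ both exist and lie in $[0,2\pi]$.

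Finally I would integrate $E=2\,\frac{d}{dt}L$ over an arbitrary window $[T_1,T_2]$ to obtain $\int_{T_1}^{T_2}E(\tau)\,d\tau = 2\big(L(T_2)-L(T_1)\big)\le 4\pi$, a bound uniform in $T_1<T_2$. Because $E\ge 0$, the truncated integrals increase as the window widens yet stay below $4\pi$, hence converge; letting $T_1\to-\infty$ and $T_2\to+\infty$ yields $\int_{-\infty}^{+\infty}E(t)\,dt = 2(L_{+\infty}-L_{-\infty})\le 4\pi<\infty$. I do not expect a genuine obstacle, since this is just the two-sided analogue of Lemma \ref{lem-0toinfty}; the one point needing care is that $L(t)\le 2\pi$ holds \emph{uniformly} in $t$, which relies on the evolving curve remaining convex and embedded for all time so that Proposition \ref{dz} is available at every instant.
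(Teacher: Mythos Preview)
Your argument is correct and is essentially the same as the paper's: both use $E=2\,\frac{d}{dt}L$ together with the monotonicity of $L$ and the bound $L\le 2\pi$ from Proposition~\ref{dz} to conclude that the integral of $E$ is finite. The only cosmetic difference is that the paper splits the line into $(-\infty,0]$ and $[0,+\infty)$ (invoking Lemma~\ref{lem-0toinfty} for the latter) and bounds the backward piece by $2L(0)-2L(t)\le 2L(0)\le 4\pi$, so it needs the isoperimetric inequality only at $t=0$ together with $L(t)\ge 0$, whereas you apply Proposition~\ref{dz} at every time; under the standing hypothesis that the curve is convex and embedded for all $t$ this makes no difference.
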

\begin{proof}
	From Lemma \ref{lem-0toinfty}, it is suffice to prove $\displaystyle \int_{-\infty}^{0}E(t)dt $  converges.
	The proof is similar to that of  Lemma \ref{lem-0toinfty}.
	It is not hard to verify that
	\begin{equation*}
		\int_{t}^{0} E(\tau)d\tau=2L(0)-2L(t)\le 2L(0)\le4\pi,
	\end{equation*}
	which implies
	\begin{equation*}
		\int_{-\infty}^{0}E(t)dt\le 4\pi.
	\end{equation*}
	Since $ E(t)\ge0 $,  $  \displaystyle\int_{t}^{0} E(\tau)d\tau$ monotonically increasing backward and has an upper bound. Thus $ 		\displaystyle\int_{-\infty}^{0}E(t)dt$ converges.	
\end{proof}
As a consequence of Corollary \ref{as-dbound} and Lemma \ref{-+c}, the following proposition can be obtained with a similar argument in Proposition \ref{prop-Eto0}, so we list it without proof.
\begin{prop}\label{ae-0}
	Let $ C(\cdot,t) $ be a solution to the flow \eqref{pC_t} defined on $ (-\infty,+\infty)$. Then
	\begin{equation*}
		\displaystyle\lim\limits_{t\rightarrow-\infty}E(t)=0.
	\end{equation*}
\end{prop}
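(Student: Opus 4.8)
The plan is to mirror, with the two time-endpoints interchanged, the averaging argument that established Proposition~\ref{prop-Eto0}; both analytic ingredients are already in hand. Corollary~\ref{as-dbound} with $n=0$ supplies a uniform bound $\bigl|\tfrac{d}{dt}E(t)\bigr|\le M$ valid on all of $(-\infty,+\infty)$, and Lemma~\ref{-+c} gives that $\int_{-\infty}^{0}E(\tau)\,d\tau$ converges. Because $E\ge 0$, this second fact forces the backward tails to be small: the map $T\mapsto\int_{-T}^{0}E$ is nondecreasing and bounded, so $\int_{-\infty}^{-T}E\to 0$ as $T\to+\infty$.

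First I would note that the derivative bound makes $E$ Lipschitz, hence uniformly continuous, on $(-\infty,+\infty)$, with $|E(t_1)-E(t_2)|\le M|t_1-t_2|$. Fix $\epsilon>0$, set $\delta=\epsilon/M$, and use the smallness of the backward tail to pick $T\ge 0$ so large that $\int_{T_1}^{T_2}E(\tau)\,d\tau<\delta^2/2$ whenever $T_1<T_2\le -T$. Then for every $t\le -T-\delta/2$ the interval $I=[t,\,t+\delta/2]$ lies inside $(-\infty,-T]$ and contains $t$, so that both the tail bound and uniform continuity apply on $I$. Writing $E(t)$ as its own value plus and minus the average of $E$ over $I$, I would estimate
\begin{equation*}
E(t)\le\frac{2}{\delta}\int_{I}\bigl|E(t)-E(\tau)\bigr|\,d\tau+\frac{2}{\delta}\int_{I}E(\tau)\,d\tau\le\epsilon+\delta=\Bigl(1+\frac{1}{M}\Bigr)\epsilon,
\end{equation*}
since $|t-\tau|\le\delta/2<\delta$ on $I$ yields $|E(t)-E(\tau)|<\epsilon$, while the tail bound controls the second integral. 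As $\epsilon>0$ is arbitrary, this gives $E(t)\to 0$ as $t\to-\infty$.

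The argument introduces no genuinely new estimate, so the only point demanding care—the natural, if minor, obstacle—is the orientation bookkeeping. In the forward case it was the tail beyond a large positive time that was small; here it is the tail below a large negative time, so one must place the averaging window $I$ of length $\delta/2$ entirely inside $(-\infty,-T]$ while still containing the prescribed point $t$. Once $I$ is positioned correctly, the two contributions are bounded exactly as in Proposition~\ref{prop-Eto0}, and the proof closes.
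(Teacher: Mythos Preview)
Your proposal is correct and follows precisely the route the paper indicates: it invokes Corollary~\ref{as-dbound} for the uniform bound on $\tfrac{d}{dt}E$ and Lemma~\ref{-+c} for the finiteness of the backward integral, then reruns the averaging argument of Proposition~\ref{prop-Eto0} with the time orientation reversed. The paper itself omits the details, merely stating that the proof is ``a similar argument'' to Proposition~\ref{prop-Eto0}, so your write-up in fact fills in exactly what the authors left implicit.
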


\section{Proof of Theorem \ref{thm2}}\label{proof-thm}
	Let us now derive the following important proposition, and Theorem \ref{thm2} can be regarded as an immediate consequence.
	\begin{prop}\label{imp-prop}
		For a solution to the flow \eqref{pC_t}, we have for every integer $ n\ge 0 $,
		\begin{equation*}
			\max\limits_{\xi\in S^1}\left|\varphi_{\xi^n}(\xi,t)\right|\rightarrow 0  \quad \text{ as } \quad  t\rightarrow -\infty.
		\end{equation*}
	\end{prop}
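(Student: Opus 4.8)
The plan is to reduce the uniform ($L^\infty$) decay of all curvature derivatives to $L^2$ decay of the same derivatives, and then to establish the latter by an induction that mirrors the forward-time arguments of Section \ref{proof-thm}. Write $F_n(t)=\oint_C\varphi_{\xi^n}^2\,d\xi$, so that $F_0=E$. Granting for the moment that $F_n(t)\to 0$ as $t\to-\infty$ for every $n$, the proposition follows from the interpolation inequality recalled in Section \ref{EE}: since $\varphi_{\xi^n}$ is periodic with zero mean (it is a derivative, or Lemma \ref{meanzero} when $n=0$), one has $\max_{\xi\in S^1}|\varphi_{\xi^n}|^2\le c\,F_n^{1/2}F_{n+1}^{1/2}$. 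By Remark \ref{as-all} the factor $F_{n+1}$ is uniformly bounded, so $F_n\to 0$ forces $\max_\xi|\varphi_{\xi^n}|\to 0$. Thus the whole proof rests on the $L^2$ statement.

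To prove $F_n(t)\to 0$ I would show, by induction on $n$, that the improper integral $\int_{-\infty}^0 F_n(t)\,dt$ converges. Once this is known for a given $n$, the convergence $F_n(t)\to 0$ is automatic: Corollary \ref{as-dbound} bounds $|F_n'(t)|$ uniformly, so $F_n$ is uniformly continuous, and a nonnegative uniformly continuous function with convergent integral must tend to zero --- this is exactly the argument already used for $E$ in Propositions \ref{prop-Eto0} and \ref{ae-0}. The base case $n=0$ is Lemma \ref{-+c}.

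For the inductive step, assume $\int_{-\infty}^0 F_m\,dt$ converges (hence $F_m(t)\to0$) for all $m\le n$, and deduce it for $n+1$. Rearranging the identity of Lemma \ref{zk} gives $F_{n+1}=-F_n'+4F_n+N_n$, where $N_n=\sum_{i_1+i_2+i_3=n}\oint_C\varphi_{\xi^{i_1}}*\varphi_{\xi^{i_2}}*\varphi_{\xi^{i_3}}*\varphi_{\xi^n}\,d\xi$. Integrating over $[t,0]$ yields
\[
\int_t^0 F_{n+1}(\tau)\,d\tau=-\bigl(F_n(0)-F_n(t)\bigr)+\int_t^0 N_n(\tau)\,d\tau+4\int_t^0 F_n(\tau)\,d\tau .
\]
The boundary term stays bounded because $F_n$ is bounded, and $\int_t^0 F_n$ converges by hypothesis; since $F_{n+1}\ge0$, the left side is monotone in $t$, so it suffices to bound $\int_t^0 |N_n|\,d\tau$ uniformly. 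This nonlinear term is the only real obstacle.

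To control $N_n$ I would estimate each summand by keeping its two highest-order factors in $L^2$ and the remaining two in $L^\infty$: since the last factor has order $n$ and the largest of $i_1,i_2,i_3$ is $i_3\le n$, H\"older gives $\oint_C|\varphi_{\xi^{i_1}}\varphi_{\xi^{i_2}}\varphi_{\xi^{i_3}}\varphi_{\xi^n}|\,d\xi\le \|\varphi_{\xi^{i_1}}\|_\infty\|\varphi_{\xi^{i_2}}\|_\infty\,F_{i_3}^{1/2}F_n^{1/2}$. When $i_3=n$ (forcing $i_1=i_2=0$) this is $\|\varphi\|_\infty^2 F_n$, integrable since $\int F_n$ converges; when $i_3\le n-1$, the $L^\infty$ factors are uniformly bounded by Proposition \ref{ade} and $F_{i_3}^{1/2}F_n^{1/2}\le\tfrac12(F_{i_3}+F_n)$ by the arithmetic--geometric mean inequality, again integrable by the induction hypothesis. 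Altogether $|N_n(t)|\le C\sum_{m=0}^n F_m(t)$, whence $\int_{-\infty}^0|N_n|\,dt<\infty$. Consequently $\int_t^0 F_{n+1}$ is bounded and monotone, so $\int_{-\infty}^0 F_{n+1}\,dt$ converges, closing the induction. The main difficulty, as indicated, is precisely this time-integrability of the cubic-in-$\varphi$ correction $N_n$; the H\"older/AM--GM splitting above is what lets the uniform derivative bounds of Proposition \ref{ade} convert the already-known integrability of the lower-order energies into integrability of the next one.
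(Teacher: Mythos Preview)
Your argument is correct, and it takes a genuinely different route from the paper's. The paper proves the base case $n=0$ by a pointwise contradiction: if $\varphi^2(\xi_i,t_i)\ge\epsilon_0$ along a sequence $t_i\to-\infty$, then the mean value theorem (with the uniform bounds $|\varphi|\le C_0$, $|\varphi_\xi|\le C_1$ from Proposition~\ref{ade}) forces $E(t_i)\ge\epsilon_0^2/(4C_0C_1)$, contradicting Proposition~\ref{ae-0}. For the inductive step the paper \emph{re-enters the maximum principle machinery of Proposition~\ref{ade}}: once $|\varphi_{\xi^m}|\le\epsilon$ for $m\le n-1$ and $t$ very negative, the constant $K$ in \eqref{gj_k} becomes $\le\epsilon$, whence $\varphi_{\xi^n}^2(\xi,\tau_0+1)\le\epsilon+M\epsilon^2$; since $\tau_0$ is arbitrary this gives the decay of the $n$-th derivative.

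Your approach instead stays entirely on the $L^2$ side: you bootstrap the integrability $\int_{-\infty}^0 F_m\,dt<\infty$ from level $n$ to $n+1$ via the exact energy identity of Lemma~\ref{zk}, controlling the cubic remainder by H\"older/AM--GM against lower-order energies, and only at the very end pass to $L^\infty$ through the Agmon-type bound $\|\varphi_{\xi^n}\|_\infty^2\le 2F_n^{1/2}F_{n+1}^{1/2}$ (valid for mean-zero periodic functions, with constant independent of the perimeter). This recycles the forward-time energy framework of Sections~\ref{EE}--\ref{proof-thm} rather than the pointwise parabolic estimate of Section~6, and as a by-product yields the stronger intermediate statement that every $F_n(t)\to0$. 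The paper's method is more hands-on and avoids any interpolation; yours is more systematic and makes the parallelism with the forward-time argument explicit. One small caveat: the interpolation inequality as \emph{stated} in Section~\ref{EE} requires $r>1$, so strictly speaking you should justify the $L^\infty$ endpoint separately (the one-line Agmon argument $u(\xi)^2=2\int_{\xi_0}^\xi uu_\xi\,d\xi'\le 2\|u\|_2\|u_\xi\|_2$, with $u(\xi_0)=0$ from the zero mean, does this with a constant independent of $L$).
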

	\begin{proof}
		The proof is by  induction on $ n $. We first consider the case $ n=0 $.
		It is worth noting that
		\begin{equation*}
			\max\limits_{\xi\in S^1}\varphi^2(\xi,t)=\left(\max\limits_{\xi\in S^1}\left|\varphi(\xi,t)\right|\right)^2,
		\end{equation*}
		so it suffices  to show that
		\begin{equation*}
			\max\limits_{\xi\in S^1}\varphi^2(\xi,t)\rightarrow 0  \quad \text{ as }  \quad t\rightarrow -\infty.
		\end{equation*}
		Suppose, to get a contradiction, that exists $ \epsilon_0>0 $ and we take a  sequence of functions $\displaystyle \varphi^2(\xi_i,t_i) $, where $ t_i\rightarrow-\infty $, such that $ \displaystyle\varphi^2(\xi_i,t_i)\ge \epsilon_0 $ for all $ i $.  In view of Proposition \ref{ade}, we may set $\displaystyle\left|\varphi\right|\le C_0  $ and $ \displaystyle \left|\varphi_\xi\right|\le C_1 $. Then employing  the mean value theorem yields
		\begin{equation*}
			\varphi^2(\xi,t_i)\ge\epsilon_0-2C_0C_1\left|\xi-\xi_i\right|\ge\frac{\epsilon_0}{2}
		\end{equation*}
		for all $ \xi $ satisfying $ \left|\xi-\xi_i\right|\le\frac{\epsilon_0}{4C_0C_1} $.
		It follows that
		\begin{equation*}
			\oint_{C(\cdot,t_i)}\varphi^2(\xi,t_i)d\xi\ge\oint_{\left[\xi_i-\frac{\epsilon_0}{4C_0C_1},~\xi_i+\frac{\epsilon_0}{4C_0C_1}\right]}\varphi^2(\xi,t_i)d\xi\ge\frac{\epsilon_0^2}{4C_0C_1},
		\end{equation*}
		which contradicts Proposition \ref{ae-0}.
		Assume the result  holds up to $ n-1 $, namely, for every $ \epsilon>0 $, there exists $t_1=t_1(\epsilon) $ such that
		\begin{equation*}
			\left|\varphi_{\xi^m}(\xi,t)\right|\le\max\limits_{\xi \in S^1}\left|\varphi_{\xi^m}(\xi,t)\right|\le\epsilon
		\end{equation*}
		for all $t\in(-\infty,t_1]$ and $ 0\le m\le n-1 $.
		We then prove that it holds for $ n $.
		Let us go back to the process in the proof of Proposition \ref{ade}. Notice that the estimates there can be improved by the induction hypothesis. For instance, we may  set	$ K\le\epsilon $ when $ \epsilon $ is sufficiently small.
		Then it follows from \eqref{gj_k} that
		\begin{equation*}
			\varphi_{\xi^n}^2(\xi,\tau_0+1)\le\epsilon+M\epsilon^2.
		\end{equation*}
	Since $ \tau_0\le \frac{1}{2}t_1$ is arbitrary, we conclude that
		\begin{equation*}
			\left|\varphi_{\xi^n}(\xi,t)\right|\rightarrow0 \quad \text{as} \quad t\rightarrow-\infty.
		\end{equation*}
The result thus immediately follows,
		and the proof of this proposition is completed.
\end{proof}

\end{document}